\newtheorem{theo}{Theorem}[section]
\newtheorem{lemma}{Lemma}[section]
\newtheorem{prop}{Proposition}[section]
\theoremstyle{definition}
\newtheorem{definiz}{Definition}[section]
\newtheorem{rem}{Remark}[section]
\newtheorem{ex}{Example}[section]
\numberwithin{equation}{section}
\newcommand{\R}{\mathbb R}
\newcommand{\de}{\partial}
\DeclareMathOperator{\divergenza}{div}
\DeclareMathOperator{\epi}{epi}
\begin{document}
\title[The relative isoperimetric inequality: the anisotropic
case]{relative isoperimetric inequality in the plane: the
  anisotropic case}
\author[F. Della Pietra, N. Gavitone]{Francesco Della Pietra and
  Nunzia Gavitone}
\address[Francesco Della Pietra]{
Universit\`a degli studi del Molise \\
Dipartimento S.A.V.A. \\
Facolt\`a di Ingegneria \\
Via Duca degli Abruzzi \\
86039 Termoli (CB), Italia.
}
\email{francesco.dellapietra@unimol.it}

\address[Nunzia Gavitone]{
Universit\`a degli studi di Napoli ``Federico
II''\\ 
Dipartimento di Ma\-te\-ma\-ti\-ca e Applicazioni
``R. Cac\-ciop\-po\-li''\\ 80126 Napoli, Italia.
}
\email{nunzia.gavitone@unina.it}
\keywords{Anisotropic perimeter, relative isoperimetric inequalities,
  Wulff shape.}
\subjclass[2000]{52A40}

\maketitle
\begin{abstract}
In this paper we prove a relative isoperimetric inequality in  the
plane, when the perimeter is defined with respect to a convex, positively
 homogeneous function of degree one $H\colon\R^2\rightarrow [0,+\infty[$.
Under suitable assumptions on $\Omega$ and $H$, we also characterize
the minimizers.
\end{abstract}

\section{Introduction}
Let $\Omega$ be an open bounded connected set of $\R^2$, with
Lipschitz boundary. The classical relative isoperimetric inequality
states that
\begin{equation}\label{intro:reliso}
P^2(E;\Omega) \ge C \min\{|E|, |\Omega \setminus E|\},
\end{equation}
for any measurable subset $E$ of $\Omega$ (see, for example,
\cite{ff},\cite{maz},\cite{bz}). Here $|E|$ is the Lebesgue measure of
$E$, and $P(E;\Omega)$ is the usual perimeter in $\Omega$.
Being $P(E;\Omega)=P(\Omega\setminus E;\Omega)$, the inequality
\eqref{intro:reliso} can be written as
\begin{equation}\label{intro:reliso2}
P^2(E;\Omega) \ge C |E|,
\end{equation}
for any $E\subset\Omega$ such that $|E|\le |\Omega|/2$.

Natural questions related to the inequality \eqref{intro:reliso2} are
the following: finding the optimal constant
\begin{equation}\label{intro:ratio}
C(\Omega) = \inf \left\{ \frac{P^2(E;\Omega)}{|E|} \colon 0<|E|\le
\frac{|\Omega|}{2},\;E\subseteq \Omega \right\},
\end{equation}
proving that it is attained, and characterizing the minimizers.

First results in this direction can be found in \cite{bz} or
\cite{maz}, where it is proved that $C(\Omega) =\frac 8 \pi$ when
$\Omega$ is the unit disk in $\R^2$, and it is attained at a
semicircle. More generally, in \cite{cia} the author proves that for
an open convex set $\Omega$ of the plane, $C(\Omega)$ is actually a
minimum. Moreover, there exists a convex minimizer of
\eqref{intro:ratio} whose measure equals $\frac{|\Omega|}{2}$, and any
minimizer $E$ has the following 
properties:
\begin{itemize}
\item[(a)] $\de E \cap \Omega$ is either a circular arc or a straight
  segment. Moreover, neither $E$ nor $\Omega\setminus E$ is a circle.
\item[(b)] Let $T$ be one of the terminal points of $\de E \cap
  \Omega$. Then $T$ is a regular point of $\de \Omega$ and $\de E \cap
  \Omega$ is orthogonal to $\de \Omega$. As a consequence, either $E$
  or $\Omega\setminus E$ is convex.
\item[(c)] If $|E|<\frac{|\Omega|}{2}$, then $E$ is a circular sector
  having sides on $\de \Omega$. In such a case, there exists another
  minimizer $F$ which is a sector with sides on $\de \Omega$, having
  the same vertex as $E$, such that $|F|=\frac{|\Omega|}{2}$.
\end{itemize}
Furthermore, in \cite{cia} $C(\Omega)$ is explicitly computed under
the additional assumption that $\Omega$ is symmetric about a point
and also in special cases of convex domains. If $r(\Omega)$ is the
inradius of $\Omega$, then
\[
C(\Omega)=\frac{8 r^2(\Omega)}{|\Omega|}.
\]

We refer the reader to \cite{efknt} for some extremal
problems involving $C(\Omega)$. 

The purpose of the present paper is to find analogous results when
the Euclidean perimeter is replaced by an ``anisotropic'' perimeter.
More precisely, if $H$ is an arbitrary norm on $\R^2$, the perimeter
with respect to $H$ for a set $E\subseteq \R^2$ with sufficiently
smooth boundary is given by
\[
P_H(E;\Omega)=\int_{\de E\cap \Omega} H(\nu_E) \,d \mathcal H^1,
\]
where $\mathcal H^1$ is the 1-dimensional Hausdorff measure and
$\nu_E$ is the unit outer normal to $E$ (see Section 2 for the precise
definition). 

We recall that in this setting it is well-known that the following
isoperimetric inequality holds for any $E\subseteq \R^2$
\begin{equation}\label{eq:abs}
P_H^2(E;\R^2)\ge 4 |W| |E|,
\end{equation}
where $W=\{(x,y)\colon H^o(x,y)<1\}$ and $H^o$ is polar to $H$ (see
\cite{bus},\cite{dp},\cite{fm},\cite{aflt},\cite{str}).
Moreover, the equality in \eqref{eq:abs} holds if and only if $E$ is
homothetic to $W$. We refer to $W$ as the Wulff shape.

Our results can be summarized as follows. Under suitable assumptions
on $H$, we first show that an anisotropic relative isoperimetric
inequality holds. That is: when $\Omega$ is an open, bounded
connected set of $\R^2$, with Lipschitz boundary, then there exists
$C_H(\Omega)>0$ such that
\begin{equation}\label{aniso}
C_H(\Omega)= \inf \left\{ \frac{P_H^2(E;\Omega)}{|E|} \colon 0<|E|\le
\frac{|\Omega|}{2},\;E\subseteq \Omega \right\}.
\end{equation}
Then we prove that, for a convex set $\Omega$, $C_H(\Omega)$ is
actually a minimum, there exists a convex minimizer of \eqref{aniso}
 whose measure equals $\frac{|\Omega|}{2}$, and any minimizer $E$ has
 the following properties:
\begin{itemize}
\item[($\alpha$)] $\de E \cap \Omega$ is either homothetic to a Wulff
  arc (that is an arc of $\de W$) or a straight segment. Moreover,
  neither $E$ nor $\Omega\setminus E$ is homothetic to a Wulff shape.
\item[($\beta$)]  Let $T$ be one of the terminal points of $\de E \cap
  \Omega$. Then $T$ is a regular point of $\de \Omega$ and $\de E \cap
  \Omega$ verifies the following contact angle condition with $\de
  \Omega$:
  \[
  \langle \nabla H(\nu_E), \nu_\Omega \rangle = 0,
  \]
where $\nu_\Omega$ and $\nu_E$ are the usual unit outer normal vectors to
$\de \Omega$ and $\de E$ at $T$ respectively.
\item[($\gamma$)] If $|E|<\frac{|\Omega|}{2}$, then $E$ is homothetic
  to a Wulff sector (see section 2 for the precise definition) having
  sides on $\de \Omega$. In such a 
  case, there exists another minimizer $F$ which is a sector with
  sides on $\de \Omega$, having the same vertex as $E$, such that
  $|F|=\frac{|\Omega|}{2}$. 
\end{itemize}
Furthermore, we explicitly compute $C_H(\Omega)$ under the additional
assumption that $\Omega$ is symmetric about a point. Indeed,
\[
C_H(\Omega)=\frac{8 r_H^2(\Omega)}{|\Omega|},
\]
where $r_H(\Omega)$ is defined in Theorem~\ref{thm:cost}. For example,
if $\Omega$ is obtained by a rotation of $\frac \pi 2$ of a level set
of $H$, that is $\Omega=\{(x,y)\colon H(-y,x)<r\}$,
then
\[
C_H(\Omega)=\frac{8 r^2}{|\Omega|}=\frac {8}{\kappa_H},
\]
where $\kappa_H=|\{(x,y)\colon H(x,y)<1\}|$. We recover immediately
the classical result $C_H=8/\pi$ when $H$ is the Euclidean norm.

The paper is organized as follows. In Section 2 we give the precise
definitions of anisotropic perimeter and some basic properties. In
Section 3 we prove the main result. A fundamental argument
is to study problem \eqref{aniso} by considering the area $|E|$ fixed.
Finally, we give some examples.

\section{Notation and preliminaries}
Let $H:\R^2\rightarrow [0,+\infty[$ be a $C^2(\mathbb
R^2\setminus\{0\})$ function such that $H^2(\xi)$ is strictly convex
and
\begin{equation}\label{eq:omo}
  H(t\xi)= |t| H(\xi), \quad \forall \xi \in \R^2,\; \forall t \in \R.
\end{equation}
Moreover, suppose that there exist two positive constants $\alpha
\le \beta$ such that
\begin{equation}\label{eq:lin}
  \alpha|\xi| \le H(\xi) \le \beta|\xi|,\quad \forall \xi\in \R^2.
\end{equation}

We define the polar function $H^o\colon
\R^2\rightarrow [0,+\infty[$ of $H$ as
\[
H^o(v)=\sup_{\xi \ne 0} \frac{\langle \xi, v\rangle}{H(\xi)}
\]
where $\langle\cdot,\cdot\rangle$ is the usual scalar product of
$\R^2$. It is easy to verify that also $H^o$ is a convex function
which satisfies properties \eqref{eq:omo} and
\eqref{eq:lin}. Furthermore,
\[
H(v)=\sup_{\xi \ne 0} \frac{\langle \xi, v\rangle}{H^o(\xi)}.
\]
The set 
\[
W = \{  \xi \in \R^2 \colon H^o(\xi)< 1\}
\]
is the so-called Wulff shape centered at the origin.

We will call Wulff sector with vertex at the origin the set 
$A\cap W$, where $A$ is an open cone with vertex at $(0,0)$.

The following properties of $H$ and $H^o$ hold true
(see for example \cite{bp}):
\begin{gather}
 H(\nabla H^o(\xi))=H^o(\nabla H(\xi))=1,\quad \forall \xi \in
\R^2\setminus \{0\},\\
H^o(\xi) \nabla H(\nabla H^o(\xi) ) = H(\xi) \nabla
H^o(\nabla H(\xi) ) = \xi,\quad \forall \xi \in
\R^2\setminus \{0\}. \label{eq:HH0}
\end{gather}
\begin{definiz}[Anisotropic relative perimeter] Let $\Omega$ be an
  open bounded set of $\mathbb R^2$. In \cite{ab}, the perimeter of
  $F\subset \R^2$ in $\Omega$ with respect to $H$ is defined
  as the quantity
\[
P_H(F;\Omega) = \sup\left\{ \int_F \divergenza \sigma dx\colon
  \sigma \in C_0^1(\Omega;\R^2),\; H^o(\sigma)\le 1 \right\}.
\]
\end{definiz}
The equality
\[
P_H(F;\Omega)= \int_{\Omega\cap \partial^*F} H(\nu_F) d\mathcal H^1
\]
holds, where $\partial^*F$ is the reduced boundary of $F$ and $\nu_F$ is
the unit outer normal to $F$ (see \cite{ab}).

The anisotropic perimeter of a set $F$ is
finite if and only if the usual Euclidean perimeter $P(F;\Omega)$
\[
P(F;\Omega)=  \sup\left\{ \int_F \divergenza \sigma dx\colon
  \sigma \in C_0^1(\Omega;\R^N),\; |\sigma|\le 1 \right\}.
\]
 is finite. Indeed, by properties \eqref{eq:omo} and \eqref{eq:lin} we
have that
\[
\frac{1}{\beta} |\xi| \le H^o(\xi) \le \frac{1}{\alpha} |\xi|,
\]
and then
\begin{equation}\label{eq:per}
\alpha P(E;\Omega) \le P_H(E;\Omega) \le \beta P(E;\Omega).
\end{equation}

\begin{rem}\label{rem:corda}
We observe that when $\de E\cap \Omega$ is the image of a smooth
curve $\gamma(t)=(x(t),y(t))$, $t\in[a,b]$, then $P_H(E;\Omega)$
coincides with the value
  \begin{equation}\label{length}
  \mathcal L_H(\gamma)=\int_a^b H(-y'(t),x'(t))\,dt.
  \end{equation}

By regularity of $H$, the curve joining two points $P_0$ and $P_1$
which minimizes $\mathcal L_H$ is the straight segment ${P_0
  P_1}$. This can be shown by classical argument of Calculus of
Variations. We consider, for sake of simplicity, the curves
$\gamma(t)=(t,u(t))$. Denoting by $\mathcal L_H(u)=\mathcal
L_H(\gamma)$, the minimum of the problem 
\[
\left\{
  \begin{array}{l}
    \min \mathcal L_H(u), \\
    u(a)=u_a, \; u(b)=u_b,
  \end{array}
\right.
\]
is the solution to
\[
\left\{
  \begin{array}{l}
\frac{d}{dt} H_x(-u'(t),1)=0, \\
u(a)=u_a, \; u(b)=u_b.
  \end{array}
\right.
\]
Such solution is the linear function passing through $P_0=(a,u_a)$ and
$P_1=(b,u_b)$.
\end{rem}

\begin{definiz}[Anisotropic curvature (\cite{atw},\cite{bp})]
  Let
  $F\subset \R^2$ be a bounded open set with smooth boundary,
  $\nu_F(x,y)$ the unit outer normal at $(x,y)\in \partial F$, in the
  usual Euclidean sense. Let $u$ be a $C^2$ function such that
  $F=\{u>0\}$, $\de F = \{ u=0 \}$ and $\nabla u \neq (0,0)$ on $\de
  F$. Hence, $\nu_F=-\frac{\nabla u}{|\nabla u|}$ on $\de F$.
  The anisotropic outer normal $n$ is defined as
  \[
  n_F(x,y)= \nabla H(\nu_F(x,y))=\nabla H\left(-\frac{\nabla u}{|\nabla
      u|}\right),\quad (x,y)\in \partial F,
  \]
  and, by the properties of $H$,
  \[
  H^o(n_F)=1.
  \]
  The anisotropic curvature $k_H$ of $\partial F$ is
  \[
  k_H(x,y)= \divergenza n_F(x,y)=\divergenza\left[ \nabla
    H\left(-\frac{\nabla u}{|\nabla u|}\right) \right], \quad (x,y)\in
  \de F.
  \]

  Let $(x_0,y_0)\in \de F$. Without loss of generality, we can locally
  describe $\de F$ with a $C^2$ function
  $v\colon]x_0-\delta,x_0+\delta[\rightarrow \R$, that is $F$ is the
  epigraph of $v$ near $(x_0,y_0)=(x_0,v(x_0))$. By properties of 
  $H$, the anisotropic curvature $k_H(x_0,y_0)$ of $\partial F$ at
  $(x_0,y_0)$ can be written as 
  \[
  k_H(x_0,y_0)= - \left. \dfrac{d}{d t}  H_x(-v'(t),1) \right |_{t=x_0}.
  \]
\end{definiz}

\begin{rem}
We stress that if $F$ is homothetic to the Wulff shape $W$ and
centered at $(x_0,y_0)$, the anisotropic outer normal at $(x,y)\in \de F$
has the direction of $(x-x_0,y-y_0)$. Indeed, being $F=\{(x,y)\colon
H^o(x-x_0,y-y_0)=\lambda\}$, for some positive $\lambda$, by property
\eqref{eq:HH0} it follows that
\[
n_F(x,y)= \nabla H\big(\nabla H^o(x-x_0,y-y_0) \big) =
\frac{1}{\lambda}(x-x_0,y-y_0).
\]
See Figure \ref{fig:normal} for an example.
\end{rem}
\begin{center}
\begin{figure}[h]
\includegraphics{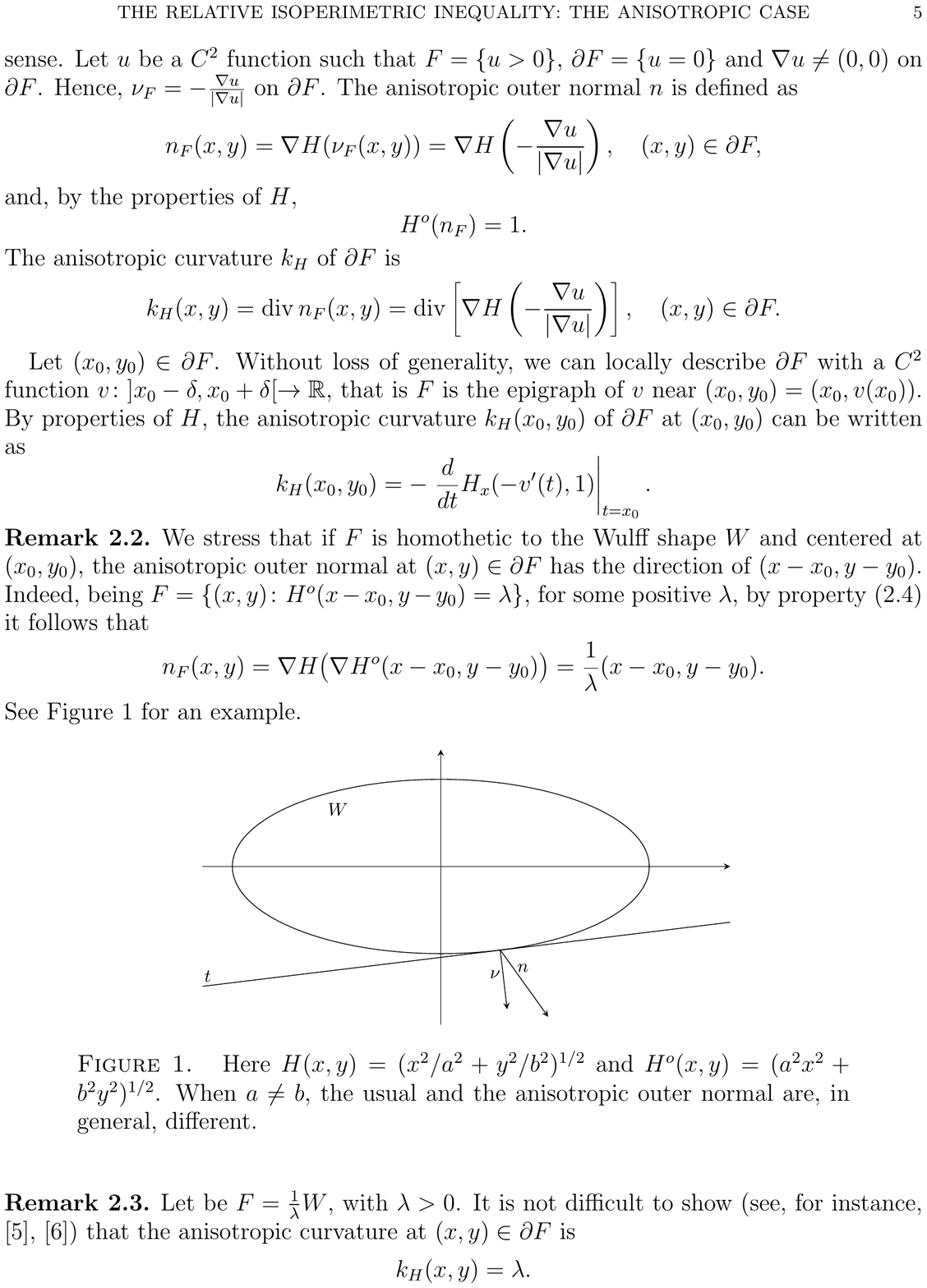}
\caption{ Here $H(x,y)=(
  {x^2}/{a^2}+ {y^2}/{b^2})^{1/2}$ and  $H^o(x,y)=(
  {a^2}{x^2}+ {b^2}{y^2})^{1/2}$. When $a\ne b$, the usual and the
  anisotropic outer normal are, in general, different.}\label{fig:normal}
\end{figure}
\end{center}

\begin{rem}\label{rem:constcurv}
Let be $F=\frac 1 \lambda W$, with $\lambda >0$. It is not difficult to
show (see, for instance, \cite{bnp}, \cite{bp}) that the anisotropic
curvature at $(x,y)\in \partial F$ is
\[
k_H(x,y)= \lambda.
\]
\end{rem}
\section{An anisotropic relative isoperimetric inequality}

\begin{theo}
Let $\Omega$ be an open bounded connected set of $\R^2$, with Lipschitz
boundary. Then an anisotropic relative isoperimetric inequality
holds. Namely, there exists a constant $C>0$ such that
\begin{equation}\label{eq:reliso}
P_H^2(E;\Omega) \ge C \min\{|E|, |\Omega \setminus E| \},
\end{equation}
for every measurable set $E\subseteq \Omega$.
\end{theo}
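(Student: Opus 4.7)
The plan is to reduce the anisotropic inequality to its Euclidean counterpart via the two-sided bound between $P_H$ and $P$ recorded in \eqref{eq:per}. Since $\Omega$ is open, bounded, connected, and has Lipschitz boundary, the classical relative isoperimetric inequality \eqref{intro:reliso} applies: there is a constant $C_0=C_0(\Omega)>0$ such that
\[
P^2(E;\Omega)\ge C_0 \min\{|E|,|\Omega\setminus E|\}
\]
for every measurable $E\subseteq\Omega$. This is the only nontrivial ingredient, and it is a standard fact used in the introduction.

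Next I would invoke the lower bound in \eqref{eq:per}, namely $P_H(E;\Omega)\ge\alpha P(E;\Omega)$, which follows from the linear growth assumption \eqref{eq:lin} on $H$. Squaring and chaining with the Euclidean inequality yields
\[
P_H^2(E;\Omega)\ge \alpha^2 P^2(E;\Omega)\ge \alpha^2 C_0\min\{|E|,|\Omega\setminus E|\},
\]
so the claim holds with $C=\alpha^2 C_0$. Since the estimate is trivial when $P(E;\Omega)=+\infty$, and when $P(E;\Omega)<+\infty$ the set $E$ is of finite Euclidean perimeter in $\Omega$ so that \eqref{intro:reliso} is directly applicable, no further regularization of $E$ is needed.

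There is essentially no obstacle here: the statement is a qualitative existence-of-constant result, and the norm-equivalence \eqref{eq:lin} makes $P_H$ and $P$ comparable up to multiplicative constants, so the anisotropic inequality inherits validity from the Euclidean one automatically. The substantive content of the paper lies further on, in identifying the sharp constant $C_H(\Omega)$, proving that the infimum in \eqref{aniso} is attained, and characterizing the minimizers via properties $(\alpha)$--$(\gamma)$; the present theorem is really a qualitative preliminary that guarantees $C_H(\Omega)>0$, and I would present it in just a few lines along the reduction above.
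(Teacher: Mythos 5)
Your proof is correct and is essentially identical to the paper's: both reduce to the classical Euclidean relative isoperimetric inequality for a bounded Lipschitz domain and then apply the lower bound $P_H(E;\Omega)\ge \alpha P(E;\Omega)$ from \eqref{eq:per}, yielding the constant $C=\alpha^2 C_0$.
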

\begin{proof} The hypotheses made on $\Omega$ guarantee that a
  relative isoperimetric inequality holds when we consider the
  usual perimeter $P(E;\Omega)$ (see \cite{ff},\cite{maz},\cite{cia}).
 Hence the inequality \eqref{eq:reliso} follows immediately from
 property \eqref{eq:per}.
\end{proof}
Our aim is to study, for $\Omega$ bounded and convex, the best
constant in the inequality \eqref{eq:reliso}, that is to find the
infimum
\begin{equation}\label{eq:ratio}
C_H=\inf \left\{ Q(E)\colon 0<|E|\le
\frac{|\Omega|}{2},\;E\subseteq \Omega \right\},
\end{equation}
where
\[
Q(E)=\frac{P_H^2(E;\Omega)}{|E|},
\]
to prove that $C_H$ is actually a minimum, and to characterize the
minimizers. Furthermore, we will find the explicit value of $C_H$ in
some special case.

If $E$ is a minimizer of \eqref{eq:ratio}, then $E$ solves
also the following problem under volume constraint:
\[
  \min \{P_H(F;\Omega),\, F\subset \Omega\text{ and }|F|= |E| \}.
\]
The following result characterizes the minimizers of the above
problem.
\begin{theo}\label{th:Wulff-arc}
Let $\Omega$ be an open bounded connected set of $\R^2$, with
Lipschitz boundary. Then there exists a minimizer $E$ of the problem
\begin{equation}\label{eq:pb}
  \min \{P_H(F;\Omega),\, F\subset \Omega\text{ and }|F|= k \},
\end{equation}
with $0<k\le |\Omega|/2$ fixed. Moreover, $\partial E\cap \Omega$ is
either homothetic to an arc of $\partial W$, or a straight
segment. Hence a minimizer of \eqref{eq:ratio}, if   exists, has the
same characterization.
\end{theo}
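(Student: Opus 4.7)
The plan is to establish existence by the direct method, derive an Euler--Lagrange condition forcing $\partial E\cap\Omega$ to have constant anisotropic curvature, and then classify curves of constant anisotropic curvature as pieces of (rescaled) Wulff shapes or straight segments via the Cauchy--Lipschitz theorem.

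\textbf{Existence.} I would take a minimizing sequence $\{E_n\}\subset\Omega$ with $|E_n|=k$. Inequality \eqref{eq:per} bounds the Euclidean perimeters $P(E_n;\Omega)$ uniformly, so $\{\chi_{E_n}\}$ is bounded in $BV(\Omega)$. The usual $BV$ compactness theorem extracts an $L^1$-limit $\chi_E$ with $|E|=k$ and $E\subseteq\Omega$. Lower semicontinuity of $P_H(\,\cdot\,;\Omega)$ along $L^1$ convergence is immediate from its dual definition as a supremum of continuous linear functionals indexed by smooth vector fields $\sigma$ with $H^o(\sigma)\le 1$, so $E$ is a minimizer.

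\textbf{First variation.} Let $\Gamma=\partial E\cap\Omega$, pick a regular point $P_0\in\Gamma$ strictly inside $\Omega$, and (after a rotation) describe $\Gamma$ locally as a graph $y=v(x)$; by Remark~\ref{rem:corda}, the contribution of this piece to $P_H$ is $\int H(-v'(x),1)\,dx$. For $\phi\in C^\infty_c$ with small enough support, the competitor obtained by replacing $v$ with $v+\varepsilon\phi$ lies in $\Omega$; its area differs from $|E|$ by $\varepsilon\int\phi\,dx+O(\varepsilon^2)$, while integration by parts yields
\[
P_H(E_\varepsilon;\Omega)-P_H(E;\Omega)=-\varepsilon\int\phi(x)\,\frac{d}{dx}H_x(-v'(x),1)\,dx+O(\varepsilon^2)=\varepsilon\int\phi\,k_H\,dx+O(\varepsilon^2),
\]
by the formula for $k_H$ recorded in the definition of anisotropic curvature. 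Imposing the volume constraint $\int\phi\,dx=0$ and using minimality forces $\int\phi\,k_H\,dx=0$ for every admissible $\phi$, so $k_H\equiv c$ along $\Gamma$ for some constant $c$.

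\textbf{Classification.} The pointwise identity $k_H=c$ is a second-order ODE in $v$, and strict convexity of $H^2$ makes its right-hand side Lipschitz in $v'$, so Cauchy--Lipschitz applies. If $c=0$, then $H_x(-v'(x),1)$ is constant, and the strict monotonicity coming from strict convexity forces $v'$ to be constant, so $\Gamma$ is a straight segment. If $c\ne 0$, Remark~\ref{rem:constcurv} shows that the rescaled Wulff shape $\tfrac{1}{|c|}W$ has constant anisotropic curvature $|c|$, and every direction in $S^1$ is realized exactly once as an outer normal on $\partial\bigl(\tfrac{1}{|c|}W\bigr)$. Hence I can translate $\tfrac{1}{|c|}W$ so that a prescribed point of its boundary coincides with $P_0$ with matching tangent; uniqueness of the Cauchy problem then forces $\Gamma$ to coincide locally with this translate. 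A standard connectedness/continuation argument propagates the identification to the entire connected component of $\Gamma$.

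\textbf{Main obstacle.} The delicate step is legitimizing the variational computation along the whole of $\partial E\cap\Omega$, since a priori $E$ is only a Caccioppoli set and its reduced boundary could carry singularities or touch $\partial\Omega$. This is handled by invoking the standard regularity theory for perimeter quasi-minimizers under a volume constraint, which in the planar case yields $C^{1,\alpha}$ (in fact $C^2$) regularity of the free boundary $\partial E\cap\Omega$; once regularity is in hand, the Euler--Lagrange argument above applies pointwise, and the final conclusion for minimizers of \eqref{eq:ratio} follows because any such minimizer solves the fixed-volume problem \eqref{eq:pb} with $k=|E|$.
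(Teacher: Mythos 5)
Your overall strategy (direct method for existence, local graph description, Euler--Lagrange with a Lagrange multiplier, classification of constant anisotropic curvature curves via the Wulff shape) is the same as the paper's, and the existence, regularity and classification steps are in order. However, there are two genuine gaps between what you prove and what the theorem asserts. First, your variational argument uses perturbations $\phi$ supported in a \emph{single} coordinate chart with $\int\phi\,dx=0$; this only yields that $k_H$ is locally constant, hence constant on each connected component of $\partial E\cap\Omega$, but it does not show that the constant is the same on different components (or, equivalently, that a single Lagrange multiplier works globally). The paper devotes its Step 2 to exactly this point: following Gonzalez--Massari--Tamanini, it assumes $\lambda_1<\lambda_2$ at two points, solves the obstacle-free problems with an intermediate multiplier on two small intervals, chooses the radii so that the two exchanged volumes coincide, and produces a competitor $E^*$ with $|E^*|=|E|$ and strictly smaller $P_H$. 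To close this gap in your framework you would need at least a two-chart volume-exchanging variation; the single-chart computation you wrote does not see it.

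Second, and more seriously, you never address the connectedness of $\partial E\cap\Omega$ (nor of $E$). Your continuation argument identifies each connected component of $\partial E\cap\Omega$ with a Wulff arc or a segment, so what you actually obtain is that $\partial E\cap\Omega$ is a \emph{union} of such pieces, all with the same curvature once the first gap is fixed. The theorem asserts that $\partial E\cap\Omega$ is a single arc or a single segment; the paper isolates this as Step 3 and proves it by repeating the argument of Theorem 2 of Cianchi's paper \cite{cia}. Without some such argument (e.g., showing that two distinct components could be replaced by a cheaper configuration of the same area), the stated conclusion does not follow from what you have written.
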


\begin{proof}
The existence of a minimizer of \eqref{eq:pb} follows by the
lower semicontinuity of $P_H$ (see \cite{ab}) using standard methods
of Calculus of Variations. 

To prove the result, we proceed by steps.

\noindent {\bf Step 1.}
First, we show that a minimizer $E$ is locally homothetic to an arc
of $\partial W$, or a straight segment.

Fixed $(x_0,y_0)\in \de E\cap \Omega$, we can locally describe $\de
E\cap \Omega$ with a $C^2$ function $u$ (see
\cite{atw},\cite{b},\cite{anp},\cite{np}). That is, without loss of 
generality, there exists a rectangle $R=]x_0-\delta, x_0+\delta[\times
I $ where $E\cap R$ is the epigraph of $u: ]x_0-\delta,
x_0+\delta[\to I$. Moreover, there exists $\lambda$ such that
$u$ is the minimum of the functional
\[
J(v)= \int_{x_0-\delta}^{x_0+\delta} H(-v'(t) , 1) dt
+ \lambda \int_{x_0-\delta}^{x_0+\delta} v(t) dt,
\]
with boundary conditions $v(x_0+\delta)=u(x_0+\delta)$ and
$v(x_0-\delta)=u(x_0-\delta)$. The corresponding Euler equation
associated to $J$ is
\begin{equation}\label{eq:euler}
\left\{
\begin{array}{l}
-\dfrac{d}{d t}  H_x(-v'(t),1)  = \lambda,
\quad t\in ]x_0-\delta, x_0+\delta[,\\
v(x_0\pm\delta)=u(x_0\pm\delta).
\end{array}
\right.
\end{equation}
If $\lambda=0$, there exists a linear function $u_0$ which solves
\eqref{eq:euler}. If $\lambda\neq 0$, by Remark \ref{rem:constcurv},
the function $u_\lambda(t)$, which describes $\frac 1 \lambda \de W$
(up to translation) near $x_0$, is a solution of
\eqref{eq:euler}. On the other hand,  for any $\lambda\in \R$, the
regularity on $H$ guarantees that the functional $J$ is strictly
convex. Hence, $u_\lambda=u$ is the unique
solution of \eqref{eq:euler} (see also \cite{bnp}, \cite{np}).

\noindent {\bf Step 2.}
Now we show that the minimizer has the same anisotropic curvature at
any point.

Let us take $(x_1,y_1)$ and $(x_2,y_2)$ in $\de E \cap \Omega$.
As in the step 1, let us consider $u_1\colon B_1=]x_1-\delta_1, x_1 +
\delta_1[\to I_1$ and $u_2\colon B_2=]x_2-\delta_2, x_2 + \delta_2[\to
I_2$ two functions which locally describe $\de E \cap
\Omega$. Moreover, there exist $\lambda_1$ and $\lambda_2$ such that
$u_i$, for $i=1,2$,  minimizes the functional
\[
J_i(v)= \int_{B_i} H(-v'(t) , 1) dt
+ \lambda_i \int_{B_i} v(t) dt, \quad i=1,2,
\]
with boundary conditions $v(x_i\pm\delta_i)=u_i(x_i\pm\delta_i)$.
We claim that $\lambda_1=\lambda_2$. This can be shown by arguing as
in \cite{gmt}, Theorem 2. We briefly describe the idea, and we refer
to the quoted paper for the precise details.

We assume that $0\le \lambda_1 < \lambda_2$. A similar argument can be
repeated in the other cases.

For every $\lambda\in ]\lambda_1,\lambda_2[$ there exists a function
$u_{\rho,i}$ which is the unique minimizer to
\[
\int_{B_{\rho,i}} H(-v'(t) , 1) dt
+ \lambda \int_{B_{\rho,i}} v(t) dt, \quad i=1,2,
\]
where $0<\rho < \min_i \delta_i$ and $B_{\rho,i}=]x_i-\rho,x_i+\rho[$,
with boundary conditions $v(x_i\pm\rho)=u_i(x_i\pm\rho)$.

By convexity of $H$, a comparison argument shows that $u_{\rho,1}\le
u_1$ in $B_{\rho,1}$, and  $u_{\rho,2}\ge u_2$ in $B_{\rho,2}$.
Defining
\[
V_{\rho,i}= \int_{B_{\rho,i}} \lvert u_i -u_{\rho,i} \rvert dt,
\]
it is possible to prove that there exist two suitable positive numbers
$r_1$ and $r_2$ such that
\begin{equation}\label{eq:aree}
V_{r_1,1}=V_{r_2,2}.
\end{equation}
This implies that, defining the set $E^*$ as
\[
E^*=\left[ E\cup ( \epi u_{r_1,1} \cap C_1 ) \right] \setminus 
\left[ C_2\cap(E\setminus\epi u_{r_2,2}) \right],
\]
where $C_i= B_i\times I_i$, we have that $|E^*|=|E|$.

Finally, we get that $E \Delta E^* \Subset \Omega $ and
\begin{multline*}
P_H(E; \Omega) - P_H(E^*;\Omega) = \\
= \int_{B_{r_1,1}} H(-u_1',1) dt + \int_{B_{r_2,2}} H(-u_2',1) dt + \\
- \int_{B_{r_1,1}} H(-u_{r_1,1}',1) dt - \int_{B_{r_2,2}}
H(-u_{r_2,2}',1) dt + \\ + \lambda \int_{B_{r_1,1}} (u_1- u_{r_1,1})
dt +   \lambda \int_{B_{r_2,2}} (u_2- u_{r_2,2}) dt,
\end{multline*}
where last line in the above equality vanishes, by \eqref{eq:aree}.

By minimality of $u_{r_1,1}$ and $u_{r_2,2}$, $P_H(E; \Omega) >
P_H(E^*;\Omega)$, and this contradicts the minimality of $E$. Hence,
$\lambda_1=\lambda_2$.

\noindent {\bf Step 3.} We point out that the claim of Step 2 assure
that $\de E \cap \Omega$ consists of Wulff arcs, all with the same
curvature, or straight segments. To conclude the proof of the Theorem,
we have to prove that $E$ and $\partial E\cap \Omega$ are connected.
This can be shown by repeating line by line the proof of Theorem 2
in \cite{cia}.
\end{proof}

The following property of the minimizers is a direct consequence of
Remark \ref{rem:corda}.
\begin{prop}\label{prop:conv}
  Let $\Omega$ be an open bounded connected set of $\R^2$, with
  Lipschitz boundary. Suppose that $E$ is a minimizer of
  \eqref{eq:ratio}. If $|E|<|\Omega|/2$ and $\de E \cap \Omega$ is not
  a straight segment, $\de E \cap \Omega$ is concave towards $E$.
\end{prop}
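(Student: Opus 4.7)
The plan is to argue by contradiction. Suppose $\partial E\cap\Omega$ is not concave towards $E$. Since by hypothesis $\partial E\cap\Omega$ is not a straight segment, Theorem~\ref{th:Wulff-arc} forces it to be a single connected arc $\gamma$ homothetic to an arc of $\partial W$, and the assumption of non-concavity means that $E$ lies on its convex side. I will show that a small local chord replacement strictly decreases $Q(E)$, contradicting the minimality of $E$ in~\eqref{eq:ratio}.

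Fix an interior point $P^*\in\gamma$. For $\rho>0$ small enough the subarc $\gamma'$ of $\gamma$ cut out by the Euclidean disc $B_\rho(P^*)$ has endpoints $P'_0,P'_1$ in $\Omega$; the chord $s'=\overline{P'_0P'_1}$ and the compact region $R'$ enclosed by $\gamma'\cup s'$ both lie in $\Omega$. Convexity of the Wulff shape $W$ (hence of any homothetic copy) ensures that $s'$ lies inside the corresponding translated, rescaled copy of $W$ bounded by $\gamma$, so $R'$ sits on the concave side of $\gamma'$; but by assumption $\gamma$ is convex towards $E$, so the concave side is $\Omega\setminus E$, and therefore $R'\subset\Omega\setminus E$. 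Define $E'':=E\cup R'$. Its boundary in $\Omega$ agrees with $\partial E\cap\Omega$ except that $\gamma'$ has been replaced by $s'$, and by Remark~\ref{rem:corda} the straight segment uniquely minimises $\mathcal L_H$ between its endpoints, so $\mathcal L_H(s')<\mathcal L_H(\gamma')$, whence $P_H(E'';\Omega)<P_H(E;\Omega)$. On the other hand $|E''|=|E|+|R'|>|E|$; since $|E|<|\Omega|/2$ strictly, $\rho$ can be chosen so small that $|E''|\le|\Omega|/2$, so $E''$ is admissible in~\eqref{eq:ratio}. Combining both effects,
$$
Q(E'')=\frac{P_H(E'';\Omega)^2}{|E''|}<\frac{P_H(E;\Omega)^2}{|E|}=Q(E),
$$
which is the required contradiction.

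The only delicate point is identifying on which side of the chord $s'$ the region $R'$ lies: were $\gamma$ concave (rather than convex) towards $E$, the same construction would remove area from $E$ and the effect on $Q$ would be ambiguous. The correct sign is pinned down by convexity of $W$ together with the hypothesis made for contradiction; after that, Remark~\ref{rem:corda} yields the strict decrease of $P_H$, and the strict slack $|E|<|\Omega|/2$ keeps the perturbed competitor admissible.
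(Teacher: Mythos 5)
Your proposal is correct and follows essentially the same route as the paper: assuming the arc is convex towards $E$, you add to $E$ the region between a short subarc and its chord, invoke Remark~\ref{rem:corda} for the strict decrease of $P_H$, and use the strict slack $|E|<|\Omega|/2$ to keep the competitor admissible. The only difference is that you spell out the side-identification of the chord and the admissibility check in more detail than the paper does.
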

\begin{proof}
  If $|E|<|\Omega|/2$ and $\de E \cap \Omega$ is strictly concave
  towards $\Omega\setminus E$, we can consider a new set $E^*$ by
  adding to $E$ the region of $\Omega$ between  $\de E\cap
  \Omega$ and a straight segment joining two suitable points of $\de
  E\cap \Omega$. Choosing the two points sufficiently near, we get
  that $|E|\le |E^*|\le |\Omega|/2$ and, by Remark \ref{rem:corda},
  $P_H(E^*;\Omega)< P_H(E;\Omega)$. This contradicts the minimality of
  $E$.
\end{proof}
\begin{theo}\label{thm:angle}
Let $\Omega$ be an open bounded convex set of $\R^2$. Suppose that $E$
is a minimizer of \eqref{eq:ratio}, and let $T$ be a terminal
point of $\partial E$. Then $\de \Omega$ at $T$ is $C^1$, and
\begin{equation}\label{eq:angle}
\langle n_E , \nu_\Omega \rangle = 0
\end{equation}
where $n_E$ is the anisotropic outer normal to $\partial E$ and
$\nu_\Omega$ is the usual unit outer normal to $\partial \Omega$ 
at $T$.
\end{theo}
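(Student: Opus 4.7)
My plan is to adapt the free-boundary argument of \cite{cia} to the anisotropic setting, in two essentially independent parts: first, to show that $\de\Omega$ must be $C^1$ at $T$; then, to derive \eqref{eq:angle} from a transversality computation for the area-constrained minimization. Throughout, let $\lambda$ denote the constant anisotropic curvature of $\de E\cap\Omega$ identified in Theorem~\ref{th:Wulff-arc}.

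For the $C^1$ regularity of $\de\Omega$ at $T$, I would argue by contradiction. If $\de\Omega$ had a corner at $T$, then by convexity of $\Omega$ the two one-sided tangents to $\de\Omega$ at $T$ would form a strict outward-pointing wedge. By Theorem~\ref{th:Wulff-arc}, near $T$ the arc $\de E\cap\Omega$ is either homothetic to $\de W$ or a straight segment; in either case one can slide $T$ along $\de\Omega$ into a nearby smoother point $T'$, replace the piece near $T$ by the shorter arc of the same type ending at $T'$, and correct the small area mismatch by a local interior normal perturbation supported away from $T$. Using the homogeneity and strict convexity of $H^2$, together with \eqref{eq:lin} and Remark~\ref{rem:corda}, one checks that the saved anisotropic perimeter strictly dominates the cost of the area-fixing correction, contradicting the minimality of $E$.

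With smoothness of $\de\Omega$ at $T$ in hand, I would parametrize $\de E\cap\Omega$ near $T$ as $\gamma(t)=(x(t),y(t))$, $t\in[a,b]$, with $\gamma(b)=T$, and consider one-parameter area-preserving variations $X$ that equal $\tau_\Omega$ (a unit tangent to $\de\Omega$) at $T$, are tangent to $\de\Omega$ along $\de E\cap\de\Omega$ near $T$, and are compensated by a local interior normal perturbation on $\de E\cap\Omega$. Since $E$ minimizes $P_H$ under the area constraint, the first variation $\delta P_H(E;\Omega)=0$ along any such $X$. The Euler--Lagrange identity $k_H\equiv\lambda$ absorbs the interior contribution, leaving only the boundary term $p(T)\cdot\tau_\Omega=0$, where $p=(\de_{x'}L,\de_{y'}L)$ is the conjugate momentum of the integrand $L(\gamma,\gamma')=H(-y',x')$ of $P_H$ along $\gamma$. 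A direct computation gives
\[
p \;=\; \bigl(H_\eta(-y',x'),\,-H_\xi(-y',x')\bigr),
\]
which is precisely $n_E=\nabla H(\nu_E)$ rotated by $-\pi/2$, so $p\perp n_E$ in $\R^2$. Hence $p\cdot\tau_\Omega=0$ forces $p\parallel\nu_\Omega$, equivalently $\langle n_E,\nu_\Omega\rangle=0$, which is \eqref{eq:angle}.

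I expect the main obstacle to be the $C^1$ regularity at $T$: the transversality identity in the second part is essentially forced by the calculus of variations once smoothness of $\de\Omega$ at $T$ is available, but excluding corners demands a careful quantitative competitor construction in which strict convexity of $H^2$ and Remark~\ref{rem:corda} play the role that strict convexity of Euclidean length plays in~\cite{cia}.
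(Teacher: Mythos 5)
Your second part — the transversality computation — is sound in its conclusion and essentially reproduces the geometric justification the paper gives in Remark~\ref{rem:angle}: the momentum $p=(H_\eta(-y',x'),-H_\xi(-y',x'))$ is indeed $n_E$ rotated by $\pi/2$, so $p\cdot\tau_\Omega=0$ is equivalent to \eqref{eq:angle}, and since $E$ minimizes $Q$ it does minimize $P_H$ at fixed area, so area-preserving variations are legitimate. But note that the paper does not argue this way: it builds explicit competitors in three cases, and in its Case~2 the natural competitor changes \emph{both} perimeter and area, forcing an explicit verification that the ratio $Q$ still decreases (the monotonicity of the function $f(l_1)$). Your route avoids that computation only if you can actually realize the two-sided, area-preserving deformation by admissible subsets of $\Omega$; since $\Omega$ is merely convex, a flow ``tangent to $\de\Omega$ near $T$'' is not available off the shelf, and making the variation concrete brings you back to add-and-subtract constructions of exactly the type the paper uses in its Case~3 and in Proposition~\ref{properties}. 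This is a repairable but real amount of missing work.

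The genuine gap is in your first part. You assert that if $\de\Omega$ has a corner at $T$ one can slide $T$ to a nearby smoother point, replace the arc, and check that ``the saved anisotropic perimeter strictly dominates the cost of the area-fixing correction,'' but you never identify \emph{why} any perimeter is saved — sliding the endpoint along $\de\Omega$ can just as well increase $\mathcal L_H$ of the connecting arc, depending on the direction and on the angle it makes with the one-sided tangents. Moreover, your own variational framework cannot close this case: at a corner only one-sided tangential variations are admissible, and they yield two inequalities $p(T)\cdot\tau_i\ge 0$ ($i=1,2$) which do not by themselves produce a contradiction. The mechanism that actually works (and that the paper uses) is different: the orthogonality relation \eqref{eq:angletg} determines the optimal direction $r$ uniquely from the tangent line $s$, so if the left and right tangents $s_1\ne s_2$ exist at $T$, the tangent $t$ to $\de E$ fails the contact angle condition with respect to at least one of them, and then the \emph{same} competitor constructions used to prove \eqref{eq:angle} at regular points (which are one-sided and therefore insensitive to the corner) strictly decrease $Q$. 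In other words, the $C^1$ regularity of $\de\Omega$ at $T$ is a corollary of the angle-condition argument, not an independent preliminary step; as written, your proposal proves neither the regularity nor, consequently, the full statement.
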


\begin{rem}\label{rem:angle}
The angle condition is justified by the following natural geometric
argument.

Let $ s \colon \alpha_s x + \beta_s y + q_s = 0 $ be a straight line,
$P_0=(x_0,y_0)\in \R^2\setminus s$. By an immediate calculation, the
straight segment which minimizes $L_H$ between $P_0$ and $s$
is parallel to the straight line $r \colon \alpha_r x + \beta_r y=0$
which has to satisfy the following orthogonality condition:
\begin{equation}\label{eq:angletg}
  \langle \nabla H(\beta_r,\alpha_r) , (\beta_s, \alpha_s) \rangle = 0.
\end{equation}

Using the notation of Theorem \ref{thm:angle}, if we consider as $r$
the tangent line to $\de\Omega$ at a terminal point $T$ of $\de E
\cap \Omega$, and as $s$ the tangent straight line to $\de E$ at $T$,
then \eqref{eq:angle} and \eqref{eq:angletg} coincide.
\end{rem}

\begin{figure}[h]
\begin{center}
 \includegraphics{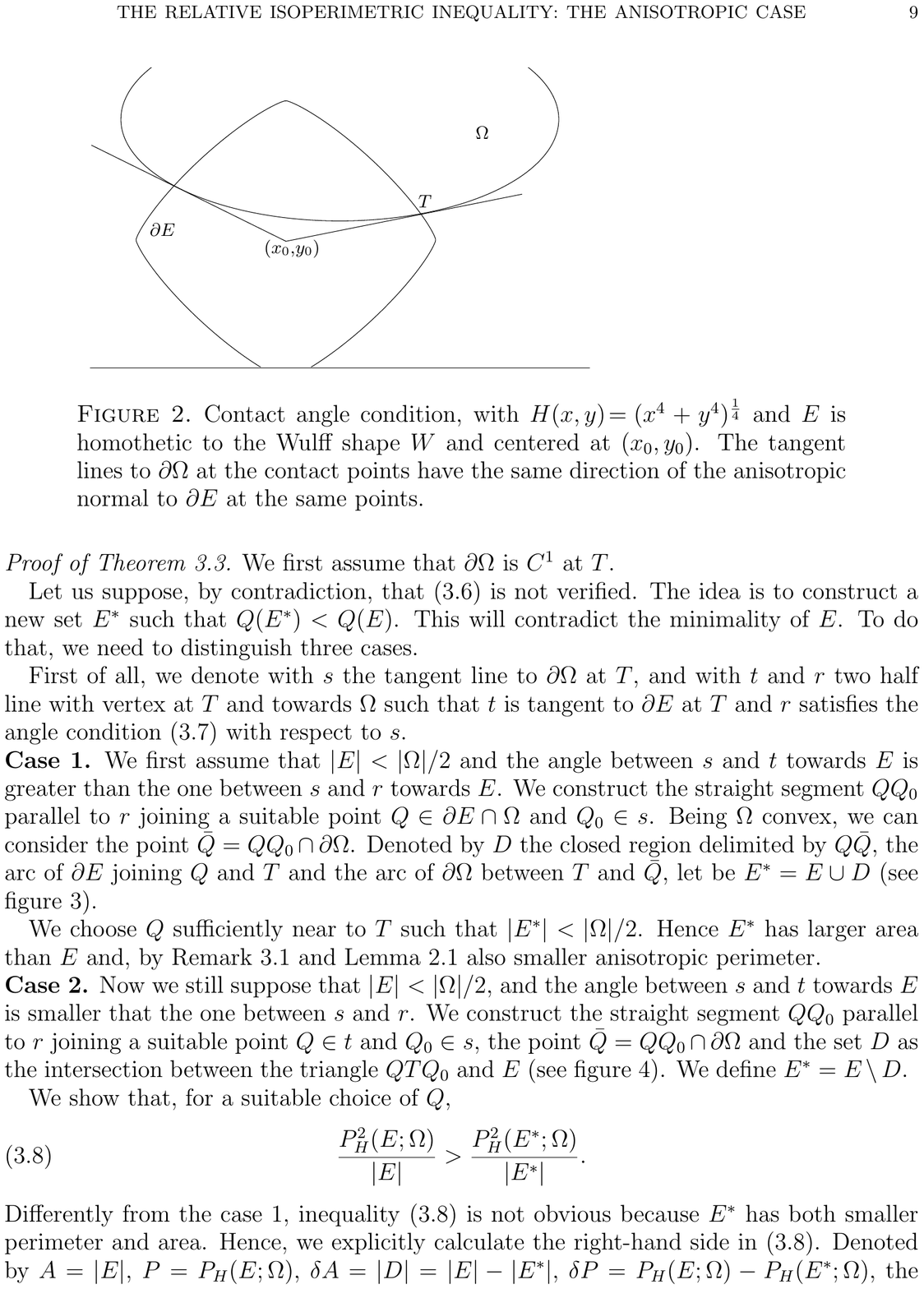}
\end{center}
\caption{Contact angle condition, with $H(x,y) \!\!=\!(x^4+y^4)^{\frac
   1 4}$ and $E$ is homothetic to the Wulff shape $W$ and centered at
  $(x_0,y_0)$. The tangent lines to $\de \Omega$ at the contact points
  have the same direction of the anisotropic normal to $\de E$ at the
  same points.
}
\end{figure}

\begin{proof}[Proof of Theorem \ref{thm:angle}]

We first assume that $\de \Omega$ is $C^1$ at $T$.

Let us suppose, by contradiction, that \eqref{eq:angle} is not
verified. The idea is to construct a new set $E^*$ such that
$Q(E^*)<Q(E)$. This will contradict the
minimality of $E$. To do that, we need to distinguish three cases.

First of all, we denote with $s$ the tangent line to $\de \Omega$ at
$T$, and with $t$ and $r$ two half line with vertex at $T$ and towards
$\Omega$ such that $t$ is tangent to $\de E$ at $T$ and $r$ satisfies
the angle condition \eqref{eq:angletg} with respect to $s$. 

\noindent {\bf Case 1.} We first assume that $|E|<|\Omega|/2$ and
the angle between $s$ and $t$ towards $E$ is greater than the one
between $s$ and $r$ towards $E$. We construct the straight segment ${Q
  Q_0}$ parallel to $r$ joining a suitable point $Q \in \de E \cap
\Omega$ and $Q_0\in s$. Being $\Omega$ convex, we can consider the
point $\bar Q = {QQ_0}\cap \de \Omega$. Denoted by $D$ the closed
region delimited by ${Q\bar Q}$, the arc of $\de E$ joining $Q$ and
$T$ and the arc of $\de\Omega$ between $T$ and $\bar Q$, let be
$E^*=E\cup D$ (see figure \ref{fig:3}).
\begin{figure}[h]
\begin{center}
\begin{picture}(\textheight,5.5cm) (-1.5cm,0cm)
  \includegraphics[scale=.6]{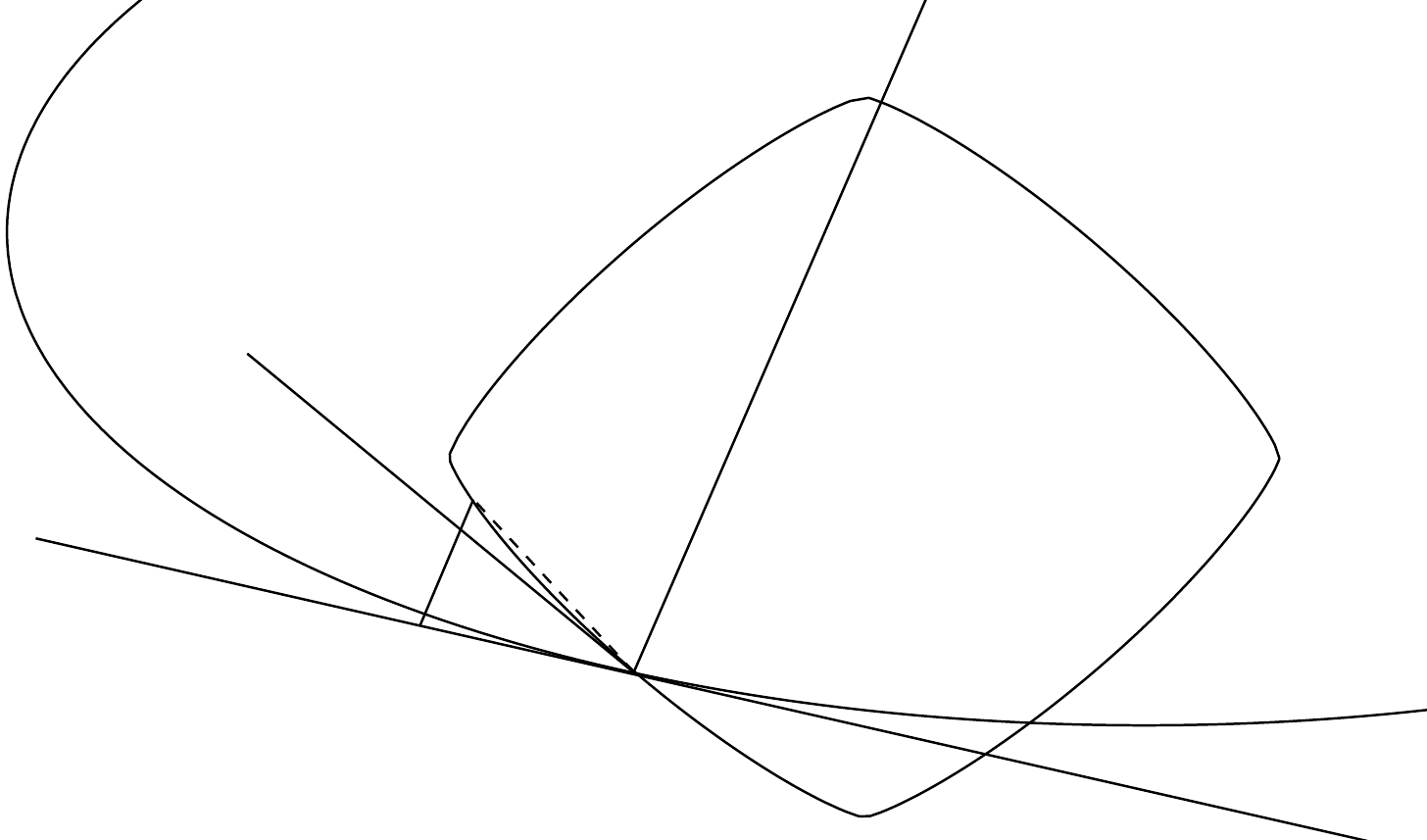}
  \put(-8cm,4.5cm){\scriptsize $\Omega$}
  \put(-2cm,2.6cm){\scriptsize $E$}
  \put(-5.1cm,0.77cm){\scriptsize $T$}
  \put(-.55cm,0.2cm){\scriptsize $s$}
  \put(-3.1cm,4.9cm){\scriptsize $r$}
  \put(-7.2cm,3cm){\scriptsize $t$}
  \put(-5.92cm,2.2cm){\scriptsize $Q$}
  \put(-6.4cm,1.09cm){\scriptsize $Q_0$}
  \put(-6.4cm,1.55cm){\scriptsize $\bar Q$}
  \put(-5.8cm,1.35cm){\scriptsize $D$}
\end{picture}
\caption{Case 1, construction of $E^*$.}\label{fig:3}
\end{center}
\end{figure}

We choose $Q$ sufficiently near to $T$ such that
$|E^*|<|\Omega|/2$. Hence $E^*$ has larger area than $E$ and, by
Remark \ref{rem:angle} and Lemma \ref{rem:corda} also smaller
anisotropic perimeter.

\noindent {\bf Case 2.}
Now we still suppose that $|E|<|\Omega|/2$, and the angle between $s$
and $t$ towards $E$ is smaller that the one between $s$ and $r$. We
construct
the straight segment ${Q Q_0}$ parallel to $r$ joining a
suitable point $Q \in t$ and $Q_0\in s$, the point $\bar Q=QQ_0\cap
\de \Omega$ and the set $D$ as the intersection between the triangle
$QTQ_0$ and $E$ (see figure \ref{fig:4}). We define $E^*=E\setminus
D$.
\begin{figure}[h]
\includegraphics{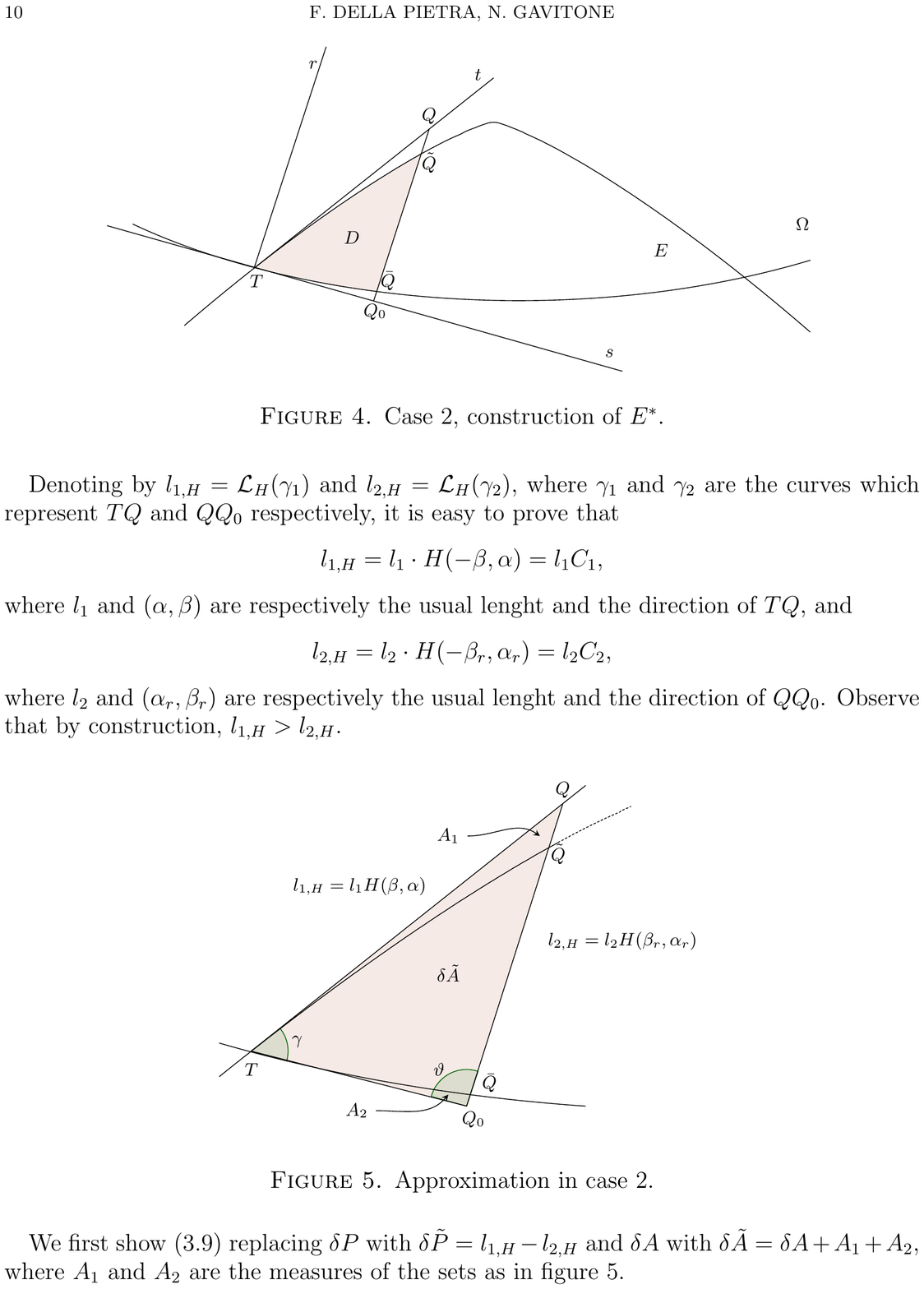}
\caption{Case 2, construction of $E^*$.}\label{fig:4}
\end{figure}

We show that, for a suitable choice of $Q$,
\begin{equation}\label{eq:contra}
  \frac{P^2_H(E;\Omega)}{|E|} > \frac{P_H^2(E^*;\Omega)}{|E^*|}.
\end{equation}
Differently from the case 1, inequality \eqref{eq:contra} is not
obvious because $E^*$ has both smaller perimeter and area. Hence, we
explicitly calculate the right-hand side in \eqref{eq:contra}.
Denoted by $ A=|E| $, $P=P_H(E;\Omega)$, $\delta A=|D|=|E|-|E^*|$,
$\delta P = P_H(E;\Omega) - P_H(E^*;\Omega)$, the inequality
\eqref{eq:contra} becomes
\begin{equation}\label{eq:contra2}
 \frac{P^2}{A} > \frac{(P -\delta P)^2}{A-\delta A}.
\end{equation}

Denoting by $l_{1,H}=\mathcal L_H(\gamma_{1}) $ and $l_{2,H}= \mathcal
L_H(\gamma_{2})$, where $\gamma_{1}$ and $\gamma_{2}$ are the
curves which represent $TQ$ and $QQ_0$ respectively,
it is easy to prove that
\[
l_{1,H}= l_1 \cdot H(-\beta,\alpha)=l_1 C_1,
\]
where $l_1$ and $(\alpha,\beta)$ are respectively the usual lenght
and the direction of $TQ$, and
\[
l_{2,H}= l_2\cdot H(-\beta_r,\alpha_r)=l_2C_2,
\]
where $l_2$ and $(\alpha_r,\beta_r)$ are respectively the usual lenght
and the direction of $QQ_0$. Observe that by construction,
$l_{1,H}>l_{2,H}$.

\begin{figure}[h]
\includegraphics{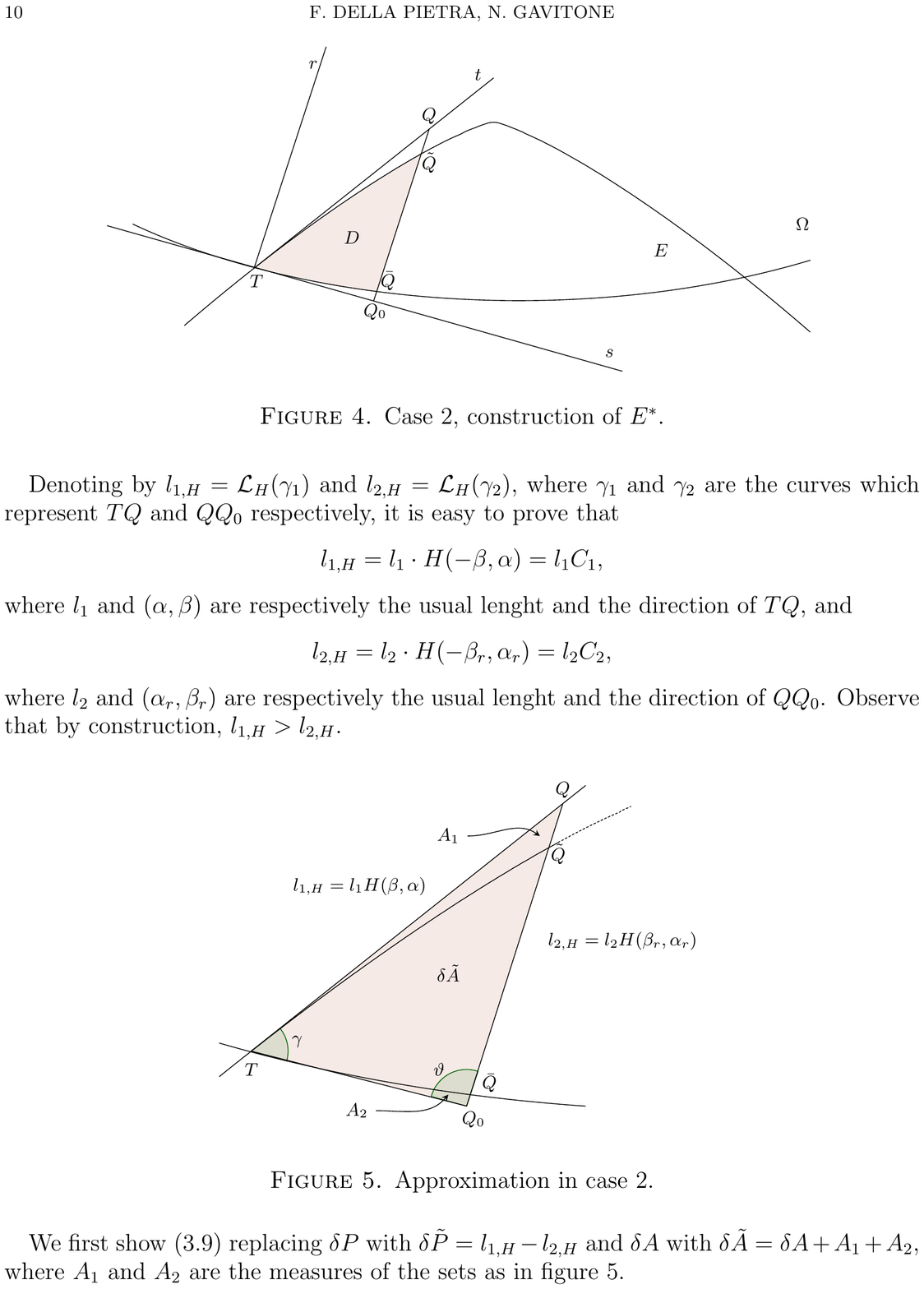}
\caption{Approximation in case 2.}\label{fig:approx}
\end{figure}

We first show \eqref{eq:contra2} replacing $\delta P$ with $\delta
\tilde P=l_{1,H}-l_{2,H}$ and $\delta A$ with $\delta \tilde A=\delta
A+A_1+A_2$, where $A_1$ and $A_2$ are the measures of the sets as in
figure \ref{fig:approx}.

By elementary properties of triangles,
\[
  \frac{(P-\delta \tilde P)^2}{A-\delta \tilde A}=
  \frac{(P-l_1C_1+l_2C_2)^2}{A-l_1l_2\sin(\gamma + \vartheta)} =
  \frac{\left(P-l_1\left(C_1-\frac{\sin \gamma}{\sin
          \theta}C_2\right)\right)^2}{A-l_1^2 \frac{\sin \gamma}{\sin
      \theta} \sin(\gamma+\vartheta) }=f(l_1)
\]
The function $f$ is strictly decreasing in the interval $[0,\bar C]$,
with
\[
\bar C= \frac{A}{P} \frac{C_1-C_2 \frac{\sin \gamma}{\sin
    \theta}}{\frac{\sin \gamma}{\sin \theta}\sin(\gamma+\theta)}
\]
which is strictly positive, being $l_{1,H}>l_{2,H}$. This implies that, for
$l_1 < \bar C $,
\begin{equation}\label{eq:appr2}
 \frac {P^2}{A} > \frac{(P-\delta \tilde P)^2}{A-\delta \tilde A}.
\end{equation}
On the other hand, by Remark~\ref{rem:corda} we get
\[
\delta P \ge \delta \tilde P.
\]
Hence, being obviously $\delta \tilde A\geq\delta A$, by
\eqref{eq:appr2}, it follows \eqref{eq:contra2} for a suitable choice
of $Q$.

\noindent{\bf Case 3.} Finally, if $|E|= |\Omega|/2$, we can both
consider, as minimum sets, $E$ and $\Omega\setminus E$. Hence, if the
angle condition is not verified, we can suppose, without loss of
generality, that the lines $r,s$ and $t$ verify the hypotheses of case
2.

If $\de E \cap \Omega$ is a straight segment, or it is strictly
concave towards $E$, we can repeat line by line the same argument of
case 2.
Otherwise, if $\de E \cap \Omega$ is strictly concave towards
$\Omega\setminus E$, proceeding as in case 1 we construct the straight
segment $QQ_0$, and another straight segment $BC$ joining two suitable
points of $\de E\cap \Omega$. Let $D_1$ and $D_2$ be as in Figure
\ref{fig:fine}, and define $E^*=(E\setminus D_1)\cup D_2$. Choosing
$B,C$ and $Q$ in such a way that $|E|=|E^*|$, since $P_H(E^*;\Omega)<
P_H(E;\Omega)$ we obtain a contradiction, and the proof of the Theorem
is completed when $T$ is a regular point of $\de \Omega$.
\begin{figure}[h]
\includegraphics{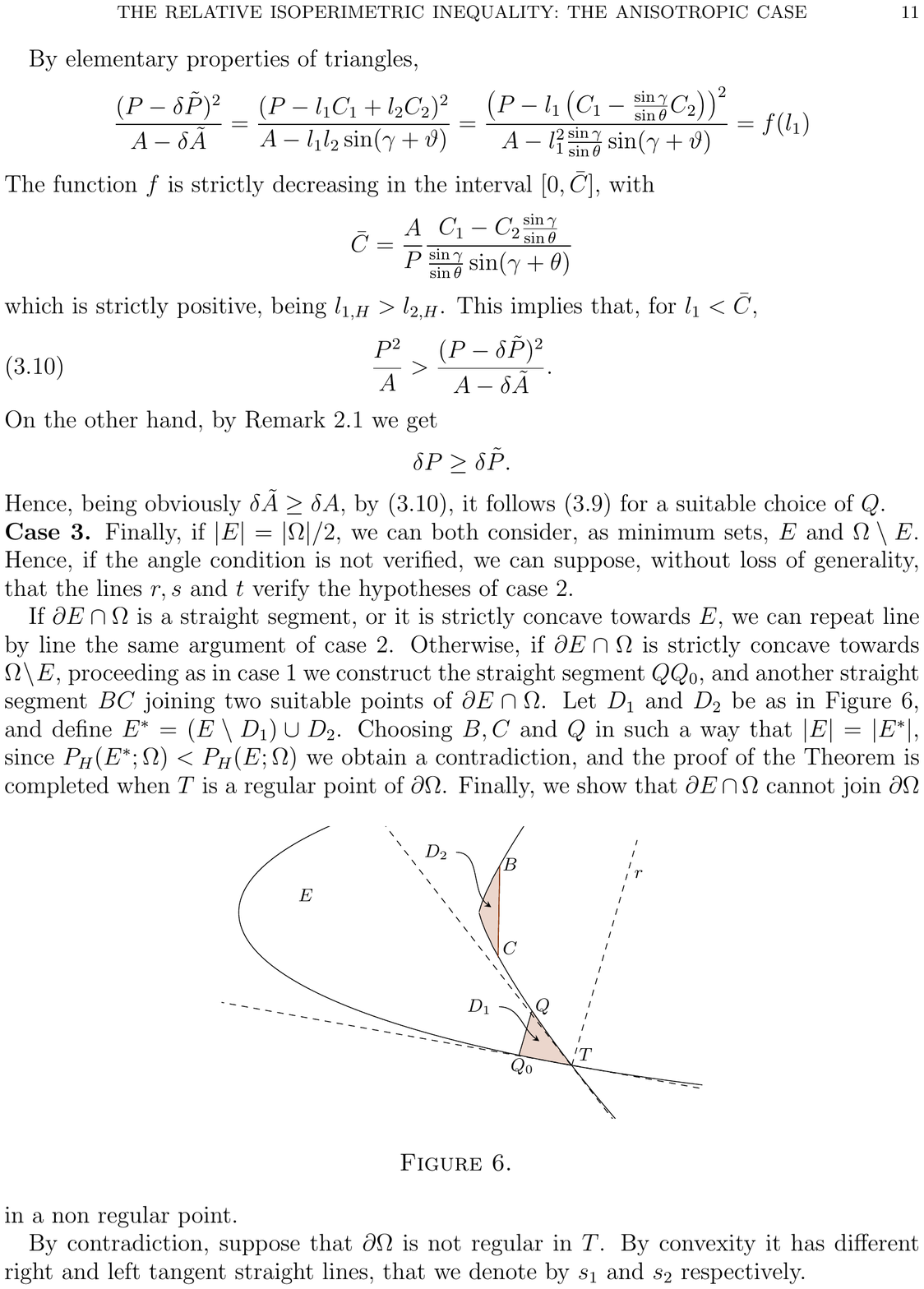}
\caption{}\label{fig:fine}
\end{figure}
Finally, we show that $\de E\cap \Omega$ cannot
join $\de \Omega$ at a non regular point.

By contradiction, suppose that  $\de \Omega$ is not regular at $T$.
By convexity it has different right and left tangent
straight lines, that we denote by $s_1$ and $s_2$ respectively.

Clearly, the tangent line $t$ does not satisfy the contact angle
condition with both $s_1$ and $s_2$. So we can repeat the arguments just
considered by replacing the straight line $s$ with $s_1$ or $s_2$, and
obtaining a contradiction with the minimality of $E$.
\end{proof}

\begin{prop}\label{properties} Let $\Omega$ be an open bounded convex
  set of $\R^2$, $0<k\le|\Omega|/2$,
  and set $E_k$ be a minimizer of problem
\[
 \min \{P_H(F;\Omega),\, F\subset \Omega\text{ and }|F|= k \}.
\]
We have the following properties:
\begin{enumerate}
\item \label{item1} neither $E_k$ nor $\Omega\setminus E_k$ is
  homothetic to a Wulff shape;
\item \label{item2} if $k<|\Omega|/2$, and $T_1$ and $T_2$ are the
  terminal points of $\de E_k\cap \Omega$ on $\de \Omega$,
 then the left and right tangent straight lines at $T_1$ to $\de
 \Omega$ do not make a cone towards $\Omega \setminus E_k$ with the
 analogous lines at $T_2$.
\item \label{item3}  if $k<|\Omega|/2$ and $\de E_k \cap \Omega$ is not
  a straight segment, $\de E_k \cap \Omega$ is concave towards $E$.
\end{enumerate}
\end{prop}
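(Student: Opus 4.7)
The strategy is to prove each item by contradiction, in each case exhibiting a competitor $F\subset\Omega$ with $|F|=k$ and $P_H(F;\Omega)<P_H(E_k;\Omega)$, violating the minimality of $E_k$ in~\eqref{eq:pb}.

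Item~(3) is a variant of Proposition~\ref{prop:conv}. If some subarc of $\de E_k\cap\Omega$ were strictly concave towards $\Omega\setminus E_k$, then by Remark~\ref{rem:corda} replacing it by its straight chord strictly decreases the anisotropic length; a disjoint small inward modification of $\de E_k\cap\Omega$ restores the area to $k$ at only a higher-order cost in perimeter, producing the required competitor.

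For item~(1), assume $E_k$ is homothetic to $W$. By the equality case of~\eqref{eq:abs}, $P_H(E_k;\Omega)^2=4|W|k$. The model competitor is a ``half-Wulff'': using the central symmetry of $W$ (a consequence of $H(\xi)=H(-\xi)$, hence $H^o(\xi)=H^o(-\xi)$), a Wulff ball of area $2k$ cut through its center by a straight line produces two halves of area $k$, the curved part of each having anisotropic length $\sqrt{2|W|k}$ and hence ratio $2|W|<4|W|$. Such a half can be placed against a supporting line of $\Omega$ at a smooth boundary point whenever it fits; otherwise one translates $E_k$ itself and defines $F_t=(E_k+tv)\cap\Omega$, and a first-variation computation exploiting the strict convexity of $H^2$ shows that $P_H^2(F_t;\Omega)/|F_t|$ decreases strictly as soon as $F_t$ starts exiting $\Omega$ on a set of positive $\mathcal H^1$-measure. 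The case of $\Omega\setminus E_k$ homothetic to $W$ is ruled out by exchanging the roles of $E_k$ and $\Omega\setminus E_k$.

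Item~(2) invokes the contact-angle condition~\eqref{eq:angle} from Theorem~\ref{thm:angle}. Suppose the tangent lines to $\de\Omega$ at $T_1$ and at $T_2$ (left and right at a corner) together bound a cone with vertex $V$ opening towards the arc of $\de\Omega$ on the $\Omega\setminus E_k$-side. Translating $\de E_k\cap\Omega$ rigidly in the direction from $V$ to the arc's midpoint slides its endpoints along $\de\Omega$, and because the bounding tangent lines diverge from $V$, the displaced arc encloses strictly more area than $k$ with no greater anisotropic length; a slight rescaling back to area $k$ then contradicts minimality. The main obstacle will be item~(1): the half-Wulff model may fail to fit inside $\Omega$ when $E_k$ is comparable in size to $\Omega$, so the translation-and-cutoff argument must be made rigorous by a careful first-variation analysis using the strict convexity of $H^2$ together with the equality characterization in~\eqref{eq:abs}.
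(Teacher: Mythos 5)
Your overall strategy (contradiction via a competitor of the same area and smaller $P_H$) matches the paper's, but each item contains a gap, and the most serious one is in item~(3). You claim that after replacing a strictly concave subarc by its chord the area can be restored by ``a disjoint small inward modification\dots at only a higher-order cost in perimeter''. That would be true if the free boundary contained a straight piece, but item~(3) concerns precisely the case in which $\de E_k\cap\Omega$ is a Wulff arc of constant nonzero anisotropic curvature $\lambda$ (Theorem~\ref{th:Wulff-arc}); the only boundary you are allowed to perturb is that arc, and the first variation of $P_H$ under a normal perturbation there equals $\lambda$ times the area change --- a \emph{first-order} cost, not a higher-order one. Worse, the chord replacement gains perimeter only at the rate $\tfrac{\lambda}{2}$ per unit of area added (in the isotropic model; the anisotropic case is analogous), while the smooth restoration costs at the rate $\lambda$, so at leading order your competitor has \emph{larger} perimeter. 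The paper escapes this by proving item~(2) first and then using it in item~(3): by (2) the tangent lines at $T_1,T_2$ are parallel or form a cone towards $E_k$, so the chord-modified arc can be translated towards the vertex of that cone, which restores $|E|=k$ while further shortening its trace in $\Omega$. Your item~(3) cannot stand independently of item~(2) in the form you propose.

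Items~(1) and~(2) also have problems. In~(1), the flat side of your half-Wulff lies in the interior of $\Omega$ except at the tangency point, so it contributes to $P_H(\cdot\,;\Omega)$ and $\sqrt{2|W|k}$ is not its relative perimeter; and your fallback argument controls the ratio $P_H^2(F_t;\Omega)/|F_t|$ at a \emph{smaller} area, which contradicts nothing for the fixed-area problem unless you invoke the monotonicity of $\mu$ --- proved later in Theorem~\ref{thm:decr} \emph{using} this very proposition, hence circular. The paper instead translates the Wulff shape until it is internally tangent to $\de\Omega$ at a regular point $P$, notes that the contact angle condition fails there because $\langle\nabla H(\nu),\nu\rangle=H(\nu)>0$, and applies the area-preserving add-and-subtract construction of case~3 of Theorem~\ref{thm:angle}; no appeal to the equality case of \eqref{eq:abs} is needed. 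In~(2), the configuration to be excluded is the one where the tangent lines converge towards $\Omega\setminus E_k$, i.e.\ the vertex $V$ lies on that side and $\Omega$ narrows towards $V$; you have the cone oriented the other way, and your key assertion that the translated arc has ``no greater anisotropic length'' fails in the direction you translate, since the chord of $\Omega$ lengthens as the arc moves into the widening part of the wedge. Moreover ``a slight rescaling back to area $k$'' is not available here: a dilation of a subset of $\Omega$ need not remain in $\Omega$ and does not scale $P_H(\cdot\,;\Omega)$. The correct move is to translate the arc \emph{towards} $V$, where convexity forces its trace in $\Omega$ to shorten, and to fix the translation parameter by continuity so that the region on the $V$-side has measure exactly $k$.
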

\begin{proof}
We prove the three properties by contradiction with the minimality of
$E_k$, finding a set with same area and smaller perimeter.

Let $E_k$ or $\Omega \setminus E_k$ be homothetic to a Wulff
shape. Since the perimeter $P_H(E_k;\Omega)$ is invariant up to
translations in $\Omega$, we can suppose that  $\de E_k$ touches
at least at one (regular) point $P\in \de \Omega$, and there exists a
small ball $B_P$ centered at $P$ such that $B_P\cap \de E_k\not\subset
\de\Omega$.

We stress that in $P$ the contact angle condition cannot hold. Indeed
$\nu_{E_k}(P)=\nu_\Omega(P)$, and by \eqref{eq:angle} and the
homogeneity of $H$ we should have
that
\[
0 = \langle n_{E_k}(P), \nu_\Omega(P) \rangle = \langle \nabla
H(\nu_\Omega(P)), \nu_\Omega(P) \rangle = H(\nu_\Omega(P)),
\]
so $\nu_\Omega=0$ and this is absurd. Then arguing as in case 3 of the
proof of Theorem \ref{thm:angle}, being $E_k$ (or $\Omega \setminus
E_k$) strictly convex we can add and subtract two small regions in
order to get a new set with the same area and smaller perimeter (see
Figure \ref{fig:fine}). This proves \eqref{item1}.

Property \eqref{item2} easily follows by the convexity of
$\Omega$. Indeed, if $E_k$ has measure smaller than $|\Omega|/2$ and
does not verify \eqref{item2}, we can do a  suitable translation $\de
E_k^t$ of $\de E_k$ towards the vertex $V$ of the cone in $\R^2$, in such
a way that the set $\tilde E$ bounded by $\de E_k^t \cap \Omega$
towards $V$ and $\de\Omega$, has measure $k$ and smaller perimeter than
$E_k$ in $\Omega$ (see figure
\ref{fig:conocontr}). This contradicts the minimality of $E_k$.
\begin{center}
\begin{figure}[h]
  \includegraphics{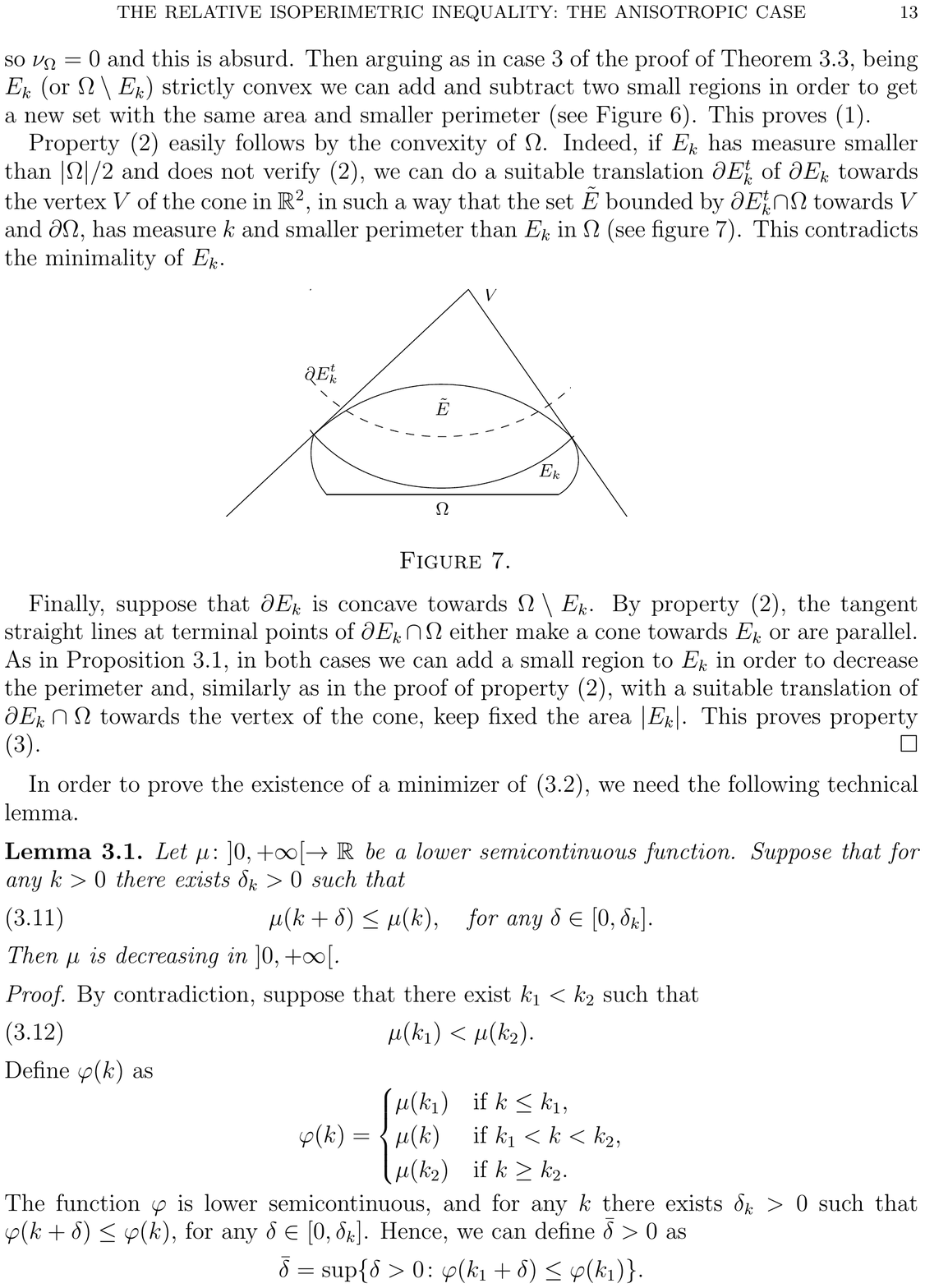}
\caption{ }\label{fig:conocontr}
\end{figure}
\end{center}

Finally, suppose that $\de E_k$ is  concave towards $\Omega\setminus
E_k$. By property \eqref{item2}, the tangent straight lines at
terminal points of $\de E_k\cap \Omega$ either make a cone towards
$E_k$ or are parallel. As in Proposition \ref{prop:conv}, in both
cases we can add 
a small region to $E_k$ in order to decrease the perimeter and,
similarly as in the proof of property \eqref{item2}, with a suitable
translation of $\de E_k\cap \Omega$ towards the vertex of the cone,
keep fixed the area $|E_k|$. This proves property \eqref{item3}.
\end{proof}
In order to prove the existence of a minimizer of \eqref{eq:ratio}, we
need the following technical lemma.
\begin{lemma}\label{lem:decr}
Let $\mu\colon]0,+\infty[\rightarrow\R$ be a lower semicontinuous
function. Suppose that for any $k>0$ there exists $\delta_k>0$ such
that  
\begin{equation}
\mu(k+\delta) \le \mu(k),\quad \text{for any } \delta \in[0,\delta_k].
\end{equation}
Then $\mu$ is decreasing in $]0,+\infty[$.
\end{lemma}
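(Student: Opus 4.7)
The plan is the standard \emph{local implies global} argument. Fix arbitrary $0 < a < b$; it suffices to show $\mu(b) \le \mu(a)$, since then for any $a_1 < a_2$ in $]0,+\infty[$ one has $\mu(a_2) \le \mu(a_1)$, which is exactly the monotonicity claim.

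To carry this out, I would introduce
\[
T = \sup\bigl\{ t \in [a,b] \colon \mu(s) \le \mu(a) \text{ for every } s\in[a,t] \bigr\}.
\]
First I would use the hypothesis at the point $a$: there exists $\delta_a > 0$ such that $\mu(a+\delta) \le \mu(a)$ for every $\delta \in [0,\delta_a]$. This gives $T \ge \min(a + \delta_a, b) > a$, so the set over which the supremum is taken is nonempty and $T$ is well defined. By the very definition of $T$, we have $\mu(s) \le \mu(a)$ for every $s \in [a, T)$.

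Next I would upgrade this to $\mu(T) \le \mu(a)$ using lower semicontinuity: picking any sequence $s_n \nearrow T$ with $s_n \in [a, T)$, one has $\mu(s_n) \le \mu(a)$ for every $n$, hence
\[
\mu(T) \le \liminf_{n\to\infty} \mu(s_n) \le \mu(a).
\]
Finally, I would argue by contradiction that $T = b$. If instead $T < b$, the hypothesis applied at $k = T$ yields $\delta_T > 0$ with $\mu(T+\delta) \le \mu(T) \le \mu(a)$ for every $\delta \in [0,\delta_T]$. Shrinking $\delta_T$ so that $T + \delta_T \le b$, we conclude that $\mu(s) \le \mu(a)$ on the whole of $[a, T+\delta_T]$, which contradicts the maximality of $T$. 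Hence $T = b$ and the proof is complete.

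There is no real obstacle here; the only point to handle carefully is the invocation of lower semicontinuity at the threshold $T$, which is what prevents $T$ from being a point where $\mu$ jumps upward. The local hypothesis then rules out $T$ being strictly inside $[a,b]$, and everything combines into the desired global monotonicity.
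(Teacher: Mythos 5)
Your proof is correct and follows essentially the same route as the paper's: a ``creeping along'' argument that takes the supremum of the set of points up to which the inequality $\mu(\cdot)\le\mu(a)$ propagates, uses lower semicontinuity to show the inequality still holds at that supremum, and then uses the local hypothesis to push past it. The only cosmetic difference is that the paper argues by global contradiction via an auxiliary truncated function $\varphi$, whereas you work directly on the interval $[a,b]$, which is if anything slightly cleaner.
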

\begin{proof}
By contradiction, suppose that there exist $k_1<k_2$ such that
\begin{equation}\label{eq:3434}
\mu(k_1)<\mu(k_2).
\end{equation}
Define $\varphi(k)$ as
\[
\varphi(k)=
\begin{cases}
\mu(k_1) &\text{if }k\le k_1,\\
\mu(k) &\text{if } k_1 <k < k_2,\\
\mu(k_2) &\text{if } k\ge k_2.
\end{cases}
\]
The function $\varphi$ is lower semicontinuous, and for any
$k$ there exists $\delta_k>0$ such that $\varphi(k+\delta) \le
\varphi(k)$, for any $\delta \in[0,\delta_k]$. 
Hence, we can define $\bar \delta>0$ as
\[
\bar\delta=\sup\{\delta >0\colon \varphi(k_1+\delta)\le
\varphi(k_1)\}.
\]
If $\bar\delta=+\infty$, then $\varphi(k_2)\le
\varphi(k_1)$, and this contradicts \eqref{eq:3434}. Hence,  
suppose that $\bar\delta<+\infty$. Being $\varphi$ lower
semicontinuous, $\bar \delta$ is actually a maximum: 
\[
\varphi(k_1+\bar\delta) \le \liminf_{\delta\rightarrow \bar\delta}
\varphi(k_1+\delta) \le \varphi(k_1).
\]
But this contradicts the definition of $\bar\delta$. Indeed, by the
property of $\varphi$ we can take $\tilde\delta>\bar\delta$
such that $\varphi(k_1+\tilde\delta)\le \varphi(k_1+\bar\delta)\le
\varphi(k_1)$. Hence, necessarily $\mu(k_1)\ge
\mu(k_2)$, and the proof is concluded.
\end{proof}
\begin{theo}\label{thm:decr}
Let $\Omega$ be an open bounded convex set of $\R^2$. Let $\mu(k)$ be
the function defined in  $]0,|\Omega|/2]$ as
\begin{equation}\label{fixA}
  \mu(k)= \min \left\{ \frac{ P^2_H(F;\Omega)}{k},\, F\subset
    \Omega\text{ and }|F|= k \right\}.
\end{equation}
Then, we have the following results hold:
\begin{enumerate}
\item $\mu(k)$ is a decreasing lower semicontinuous
  function in 
  $]0,|\Omega|/2]$, \label{en:1}
\item the sets which minimize \eqref{fixA} verify the contact angle
  condition. More precisely, they verify the thesis of Theorem
  \ref{thm:angle}. \label{en:2}
\end{enumerate}
\end{theo}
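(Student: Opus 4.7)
The plan is to prove the two assertions in the order (2), then (1). Assertion (2) is obtained by reproducing the area-preserving construction used in Case~3 of the proof of Theorem~\ref{thm:angle}. Assertion (1) then follows from a routine BV compactness argument for lower semicontinuity, and from Lemma~\ref{lem:decr} combined with a local perturbation for monotonicity.

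For (2), let $E$ minimize \eqref{fixA} and suppose that the contact angle condition \eqref{eq:angle} fails at some terminal point $T$ of $\partial E \cap \Omega$, which I first take to be a regular point of $\partial \Omega$. I would reproduce verbatim the Case~3 construction of Theorem~\ref{thm:angle}: cut off a region $D_1$ close to $T$ bounded by a straight chord $QQ_0$ parallel to the direction satisfying the anisotropic orthogonality \eqref{eq:angletg}, and compensate by adding a region $D_2$ bounded by a short straight chord joining two nearby points of $\partial E \cap \Omega$. The two chords can be chosen, by continuity, so that $|D_1|=|D_2|$; Remark~\ref{rem:corda} guarantees that each replacement strictly decreases the anisotropic length, producing $E^*$ with $|E^*|=|E|$ and $P_H(E^*;\Omega)<P_H(E;\Omega)$, a contradiction. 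Non-regular points of $\partial \Omega$ are handled by the same splitting of the one-sided tangent lines $s_1,s_2$ used at the end of the proof of Theorem~\ref{thm:angle}.

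For the lower semicontinuity of $\mu$, take $k_n\to k$ in $]0,|\Omega|/2]$ and minimizers $E_n$ at level $k_n$. A fixed small Wulff shape, translated into the interior of $\Omega$ and rescaled to have area $k_n$, is admissible, so $\mu(k_n)$ is bounded above; hence $P_H(E_n;\Omega)$ and, by \eqref{eq:per}, $P(E_n;\Omega)$ are bounded, and BV compactness gives $E_{n_j}\to E$ in $L^1(\Omega)$ with $|E|=k$. The lower semicontinuity of $P_H$ (see \cite{ab}) then yields $\mu(k)\le P_H(E;\Omega)^2/k \le \liminf_n \mu(k_n)$. For the monotonicity, by Lemma~\ref{lem:decr} it suffices to find, for every $k \in {]0,|\Omega|/2[}$, a $\delta_k>0$ such that $\mu(k+\delta)\le\mu(k)$ for all $\delta \in [0,\delta_k]$. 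Fix such a $k$ and a minimizer $E$. By Theorem~\ref{th:Wulff-arc} and by part~(2) just established, $\partial E\cap\Omega$ is either a Wulff arc of some anisotropic curvature $\lambda$, or a straight segment, and it meets $\partial\Omega$ under the contact angle condition. The plan is to consider the one-parameter family $E_t$ obtained by decreasing the anisotropic curvature of this arc (respectively translating the segment outward), while sliding the terminal points along $\partial \Omega$ so as to keep \eqref{eq:angle} satisfied at both ends; this family is well defined for small $t\ge 0$, $|E_t|$ is continuous and strictly increasing in $t$, and the contact angle condition cancels the boundary contribution to the first variation of $P_H(E_t;\Omega)$, so that the derivative of $t\mapsto P_H^2(E_t;\Omega)/|E_t|$ at $t=0$ is non-positive. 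This gives the required local inequality.

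The main obstacle is the monotonicity step: making the sliding deformation $E_t$ rigorous, and in particular checking that Wulff arcs with slightly smaller curvature and prescribed contact angles at their terminal points on $\partial \Omega$ really exist and remain inside $\Omega$ for a non-trivial range of $t$. The implicit function theorem at regular boundary points of $\Omega$, together with the convexity of both $\Omega$ and $W$, should make the construction routine. The degenerate cases, namely non-regular boundary points of $\partial \Omega$ or the transition between Wulff arcs and straight segments at $\lambda\to 0$, will require the same case distinction already used in the proofs of Theorem~\ref{thm:angle} and Proposition~\ref{properties}.
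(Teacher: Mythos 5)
Your proposal has a genuine gap, and it lies in the logical order of the two assertions. You propose to prove the contact angle condition (2) first, by an area-preserving swap as in Case~3 of Theorem~\ref{thm:angle}. But that swap only works when $\de E\cap\Omega$ is strictly concave towards $\Omega\setminus E$, so that the region $D_2$ between a chord and the arc lies \emph{outside} $E$ and can be added to compensate the area removed near $T$ while shortening the boundary. For a minimizer of \eqref{fixA} with $k<|\Omega|/2$, Proposition~\ref{properties}\eqref{item3} says the opposite: $\de E\cap\Omega$ is a straight segment or concave towards $E$, so chords of the arc lie inside $E$ and there is no region to add without strictly increasing $P_H$. (Case~3 of Theorem~\ref{thm:angle} also uses $|E|=|\Omega|/2$ to replace $E$ by $\Omega\setminus E$, which is unavailable here.) The natural perturbations in this configuration (Cases~1--2 of Theorem~\ref{thm:angle}) change the area, so they contradict minimality only once one already knows that $\mu$ is decreasing. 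This is exactly why the paper proves monotonicity \emph{first}, by a construction that does not use the contact angle condition -- dilating $E_k$ about the vertex of the cone $A$ formed by the tangent lines at the terminal points (Proposition~\ref{properties}\eqref{item2}), which gives $|E_k(\delta)|\ge|E_k|+2\delta|C|+o(\delta)$ with $|E_k|<|C|$ and $P_H(E_k(\delta);\Omega)\le(1+\delta)P_H(E_k;\Omega)$, hence $Q(E_k(\delta))<Q(E_k)$ -- and only then deduces (2) from the monotonicity of $\mu$.

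Your monotonicity step has its own problems even taken on its own terms. The sliding family $E_t$ with prescribed contact angles yields, via the first variation, $\frac{d}{dt}Q\le 0$ only if $2k_H|E|\le P_H(E;\Omega)$, an inequality you neither state nor prove (it holds with equality for Wulff sectors, so there is no room for error); the paper's cone dilation replaces this by the verifiable strict inequality $|E_k|<|C|$. Moreover, the degenerate case in which $\de E_k\cap\de\Omega\subset\de A$, i.e.\ $E_k$ coincides with the cone region $C$, defeats both your deformation and the dilation estimate (there $|C|=|E_k|$); the paper devotes its Case~2 to it, using the scale invariance of $Q$, the identity $P_H(\gamma W;A)=2\gamma|W\cap A|$, and the quantity $\gamma_{max}$ of \eqref{eq:gammamax}. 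Your proposal flags this only as ``the main obstacle'' without resolving it, so the argument as written does not close.
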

\begin{proof}
We first prove that the function $\mu$ is lower
semicontinuous in $]0,|\Omega|/2]$. 

Let be $k\in ]0,|\Omega|/2]$, and take a positive sequence  $k_n$ such
that $k_n\rightarrow k$. Consider $E_n\subset \Omega$ such that
$|E_n|=k_n$ and $\mu(k_n)=Q(E_n)= k_n^{-1} P^2_H(E_n;\Omega)$. By
Proposition~\ref{properties}, $E_n$ is convex. Hence, by the Blaschke
selection Theorem (see \cite{s}, page 50) $E_n$ converges (up to a
subsequence) to a set $E$ in the Hausdorff metric. 
Being $E_n$ convex and bounded, then $\chi_{E_n}\rightarrow \chi_{E}$
in $L^1(\Omega)$ strongly, and $|E|=k$. Using the lower semicontinuity
of $P_H(\,\cdot\,;\Omega)$ (see \cite{ab}) we get
\[
\mu(k)\le Q(E)\le \liminf_n \frac{P_H^2(E_n;\Omega)}{k_n}=\liminf_n
\mu(k_n). 
\]
In order to prove that $\mu$ is decreasing, let be
$k\in]0,|\Omega|/2[$ fixed and consider $E_k$, $|E_k|=k$ such that 
$\mu(k)=Q(E_k)$.

We claim that there exists a positive number $\delta_k$ and a family
of sets $E_k(\delta)$, $0<\delta\le \delta_k$ with continuously
increasing area and $Q(E_k(\delta)) \le Q(E_k)$.
Then
\begin{equation}\label{eq:scendo}
  \mu(|E_k(\delta)|) \le Q(E_k(\delta)) \le \mu(k), \quad \delta
  \in]0,\delta_k].
\end{equation}
Being $\mu$ lower semicontinuous in $]0,|\Omega|/2]$, by Lemma
\ref{lem:decr}  this is sufficient to show that $\mu$ is decreasing. 

By Theorem \ref{th:Wulff-arc}, $\de E_k\cap \Omega$ is a straight
segment or a Wulff arc, and by property \eqref{item1} of Proposition
\ref{properties}, it has two terminal points $T_i$ on $\de \Omega$. We
suppose that such points are regular for $\de \Omega$, so that by
property \eqref{item2} Proposition \ref{properties}, the tangent lines
to $\de \Omega$, $s_i$ at $T_i$ either are parallel or make a cone $A$
towards $E_k$. In the first case, the claim follows immediately by the
convexity of $\Omega$ and making a suitable translation of $\de E_k$.
Hence, we consider the second case, and suppose without loss of
generality that $s_1\cap s_2=(0,0)$. Moreover, by property
\eqref{item3} of Proposition \ref{properties},  $\de E_k\cap \Omega$
is a straight segment, or concave towards $E_k$.

We need to distinguish two cases for the shape of $\Omega$.

\noindent{\bf Case 1.}
$\de E_k \cap \de \Omega$ is not contained in $\de A$.

We set $C(\delta)$, $\delta \ge 0$, the region bounded by
$(1+\delta)\de E_k$ and $\de A$, and $E_k(\delta)=C(\delta)\cap
\Omega$. For sake of simplicity, we define $C(0)=C$.

Let $A_i(\delta)$ be the boundary point of $\de C(\delta)\cap
A$ on $s_i$. Moreover, let be $B_i(\delta)=\de\Omega \cap w_i $,
where $w_i$ is the tangent line to $\de C(\delta)$ at $A_i(\delta)$.
(see figure \ref{fig:angmezzo}).
\begin{figure}[h]
\begin{center}
\includegraphics{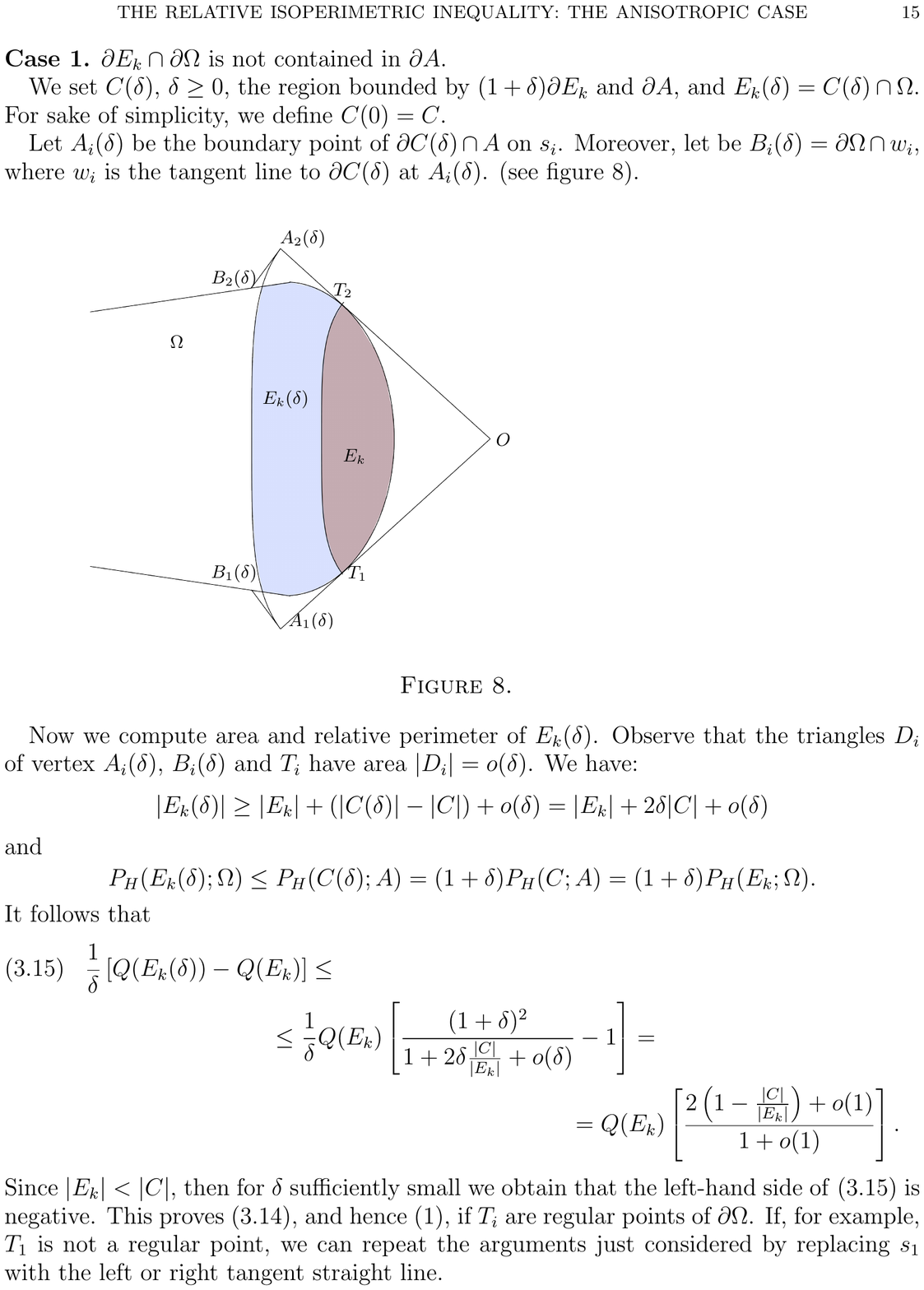}
\end{center}
\caption{}\label{fig:angmezzo}
\end{figure}

Now we compute area and relative perimeter of $E_k(\delta)$.
Observe that the triangles $D_i$ of vertex $A_i(\delta)$,
$B_i(\delta)$ and $T_i$ have area $|D_i|=o(\delta)$.
We have:
\[
|E_k(\delta)| \ge |E_k| + (|C(\delta)|-|C|)  + o(\delta)=
|E_k|+2 \delta |C| +o(\delta)
\]
and
\[
P_H(E_k(\delta);\Omega) \le P_H( C
(\delta);A)=(1+\delta)P_H(C;A)=(1+\delta)P_H(E_k;\Omega).
\]
It follows that
\begin{multline}\label{mult:mezzo}
\frac{1}{\delta}\left[Q(E_k(\delta))- Q(E_k) \right] \le \\ \le
\frac 1 \delta Q(E_k) \left[\frac{(1+\delta)^2}
      {1+2\delta \frac {|C|}{|E_k|}+o(\delta)} - 1 \right] =\\=
  Q(E_k) \left[
\frac{2 \left(1- \frac {|C|}{|E_k|} \right) +o(1) }{1+o(1)} \right].
\end{multline}
Since $|E_k| < |C|$, then for $\delta$ sufficiently small we obtain
that the left-hand side of \eqref{mult:mezzo} is negative. This proves
\eqref{eq:scendo}, and hence \eqref{en:1}, if $T_i$ are regular points
of $\de \Omega$. If, for example, $T_1$ is not a regular point, we
can repeat the arguments just considered by replacing $s_1$ with the
left or right tangent straight line.

Now we prove \eqref{en:2}. In order to fix the ideas, we consider
the regular point $T_1$ and the straight line $r$ which verifies the
contact angle condition with $s_1$. Let $\alpha_{opt}$ be the angle
between $s_1$ and $r$ towards $E_k$, and $\alpha$ the corresponging angle
between $\de E_k \cap \Omega$ and $s_1$ towards $E_k$. Suppose by
contradiction that $\alpha\ne\alpha_{opt}$.

If $\alpha<\alpha_{opt}$, then the construction made in the proof of
case 2 of Theorem \ref{thm:angle} allows to take $E^*$
such that $|E^*|<|E_k|$ and $Q(E^*)<Q(E_k)$, and this contradicts the
monotonicity of $\mu$. If $\alpha>\alpha_{opt}$, and $\de E_k\cap
\Omega$ is a Wulff arc, as in case 3 of
Theorem \ref{thm:angle} we can add and subtract two sets in order to
decrease the perimeter and to preserve the area, contradicting the
minimality of $E_k$. In the case that $\de
E_k\cap \Omega$ is a straight segment, we can add a small region to
$E_k$ in order to decrease the perimeter and with a suitable
translation, keep fixed the area $|E_k|$. 

Finally, $T_1$ cannot be a singular point for $\de \Omega$. Otherwise,
similarly as observed at the end of the proof of Theorem
\ref{thm:angle} and proceeding as above, we get a contradiction with
the minimality of the minimizer.

\noindent{\bf Case 2.} $\de E_k \cap \de \Omega$ is contained in
$\de A$, that is $E_k=C$. 

Define $E_k(\lambda)=\lambda E_k$, $\lambda \ge 0$, and $r\ge 0$ such
that 
\[ 
\lambda_{max}=\max\{\lambda\ge 0\colon E_k(\lambda) \cap \de \Omega
\subset \de A\}.
\]
First, we prove that at the terminal points of $\de E_k\cap \Omega$ it
holds the contact angle condition \eqref{eq:angle}.

In order to fix the ideas, we consider the regular point $T_1\in \de
\Omega$ and the straight line $r$ which verifies the contact angle
condition with $s_1$. Let $\alpha_{opt}$ be the angle between $s_1$
and $r$ towards $E_k$, and $\alpha$ the corresponging angle between
$\de E_k \cap \Omega$ and $s_1$ towards $E_k$. Suppose by
contradiction that $\alpha\ne\alpha_{opt}$.

\noindent{\bf Case 2-a} Let be $\lambda_{max}>1$. Reasoning as in the
proof of Theorem \ref{thm:angle}, we find $E^*$ such that
$Q(E^*)<Q(E_k)$, with $|E_k|-|E^*|$
sufficiently small. Then there exists $\rho>0$ such that $|\rho
E^*|=\rho^2|E^*|=|E_k|$, and $Q(\rho E^*)=Q(E^*)<Q(E_k)$. This
contradicts the minimality of $E_k$.

Repeating the same argument for $T_2$, we have that the terminal
points of $\de E_k \cap \Omega$ have to verify the angle condition,
that is $E_k$ is homothetic to a Wulff sector $W\cap A$.

\noindent{\bf Case 2-b} Let be $\lambda_{max}=1$. Then, as
$0<\lambda<\lambda_{max}$, the set $\lambda E_k$ is such that
$Q(\lambda E_k)=Q(E_k)$. Thanks to case 2-a, 
we have that $\mu(|\lambda E_k|)$ is 
attained at a Wulff sector,  namely the set $(\tilde \lambda W)\cap
A=(\tilde \lambda W)\cap \Omega$, for $\tilde\lambda>0$ such that
$|\lambda E_k|=|(\tilde \lambda W)\cap A|$. Hence
$\mu(|E_k|)=Q((\tilde\lambda W)\cap \Omega)<Q(E_k)$. Define
\begin{equation}\label{eq:gammamax}
\gamma_{max}=\max\{\gamma\ge 0\colon (\gamma W) \cap \Omega\text{ is
  homothetic to a Wulff sector}\}. 
\end{equation}

We have that $\gamma_{max}$ is finite and $|\gamma_{max} W\cap
\Omega|<|E_k|$. Otherwise, there exists $\gamma\le \gamma_{max}$ such that
$|\gamma W\cap \Omega|=|E_k|$ and $Q(\gamma
W\cap\Omega)=Q(\tilde\lambda W\cap\Omega)<Q(E_k)$, and 
this is a contradiction.

As matter of fact, the homogeneity of $H$ and \eqref{eq:HH0} imply,
for $\xi \in \de W$, that $H(\nu_W(\xi))=\langle
\nu_W(\xi),\xi\rangle$. Moreover,
for $\xi\in \de A$, $\langle 
\nu_A(\xi),\xi\rangle=0$. Hence by the divergence Theorem we get that,
for $\gamma> 0$,
\begin{equation}\label{npform}
P_H(\gamma W; A) = 2\gamma|W\cap A|.
\end{equation}

Define $E(\delta)=\Omega\cap[(\gamma_{max}+\delta)W]$, and $A_\delta$
the cone made by the two half-straight lines $s_i^\delta$, $i=1,2$
with origin at $(0,0)$ and passing through one of the two
terminal points of $\de[E(\delta)\cap\Omega]$.

By \eqref{npform} and the convexity of $\Omega$, we get, for an
appropriate $\delta$,
$|E(\delta)|=k$ and
\[
Q(E(\delta)) \le 4 |W\cap A_\delta| <
4 |W\cap A|= Q(\gamma_{max}W)<Q(E_k).
\]
Then $\de E_k$ must verify the contact angle condition at each
$T_i$, and this concludes the case 2-b, and \eqref{en:2} is proved. 

In order to prove \eqref{eq:scendo}, and hence \eqref{en:1}, we observe
that from \eqref{en:2}, $E_k=(\lambda W) \cap \Omega$, for
some $\lambda>0$. Let $\gamma_{max}$ as in
\eqref{eq:gammamax}, and suppose that $\gamma_{max}=\lambda$, otherwise
\eqref{eq:scendo} is immediate, being $Q(E_k)=Q(\gamma W\cap \Omega)$,
for any $0<\gamma < \gamma_{max}$. Defining
$E(\delta)=\Omega\cap[(\gamma_{max}+\delta)W]$ and reasoning as in case
2-b, we get \eqref{eq:scendo}.

Finally, the regularity of $T_i$ on $\de \Omega$ follows exactly as in
the case 1, and the proof is completed.
\end{proof}
\begin{rem}
We observe that if $E$ is a minimizer of \eqref{eq:ratio}, and 
$|E|<|\Omega|/2$, then $E$ is homothetic to a Wulff sector with sides
on $\de \Omega$. Otherwise, arguing as in case 1 of the proof of
Theorem \ref{thm:decr}, we construct a new set $E^*$ with
$Q(E^*)<Q(E)$. 
Hence, $E=E(\lambda)=A\cap(\lambda W)$ with sides on $\de \Omega$. Being
\[
Q(E(\rho))= 4 |W\cap A|,\quad \forall\, \rho\colon |E(\rho)|\le
\frac{|\Omega|}{2},
\]
where $E(\rho)=A\cap (\rho W)$, there exists another minimizer
$F$ which is a Wulff sector with sides on $\de \Omega$ and 
$|F|=|\Omega|/2$.
\end{rem}
Now we are able to prove the main result.
\begin{theo}\label{thm:main}
  Let $\Omega$ be an open bounded convex set of $\mathbb R^2$. Then
  there exists a convex minimizer of problem \eqref{eq:ratio} whose
  measure is equal to $|\Omega|/2$. More precisely, either a minimizer
  $E$ of \eqref{eq:ratio} has measure $|\Omega|/2$, or $E$ is
  homothetic to a Wulff sector with sides on $\de\Omega$. Finally, it
  verifies the contact angle condition.
\end{theo}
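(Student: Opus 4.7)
The plan is to derive the theorem essentially as a corollary of the machinery already built up, in particular Theorem~\ref{thm:decr} together with Theorem~\ref{th:Wulff-arc} and the remark immediately preceding the present statement. Let $\mu(k)$ be the function defined in \eqref{fixA}. By Theorem~\ref{thm:decr}, $\mu$ is lower semicontinuous and decreasing on $]0,|\Omega|/2]$; consequently
\[
C_H=\inf_{0<k\le|\Omega|/2}\mu(k)=\mu(|\Omega|/2),
\]
and this infimum is attained. Hence there exists $E_0\subseteq\Omega$ with $|E_0|=|\Omega|/2$ such that $Q(E_0)=\mu(|\Omega|/2)=C_H$, so $E_0$ minimizes \eqref{eq:ratio}.

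Next I would extract the geometric properties of the minimizer. By Theorem~\ref{th:Wulff-arc} the curve $\de E_0\cap\Omega$ is either a straight segment or (homothetic to) an arc of $\de W$. Since $P_H(E_0;\Omega)=P_H(\Omega\setminus E_0;\Omega)$ and $|\Omega\setminus E_0|=|\Omega|/2=|E_0|$, the complementary set $\Omega\setminus E_0$ is also a minimizer. In the straight--segment case both $E_0$ and $\Omega\setminus E_0$ are convex (as intersections of the convex set $\Omega$ with a closed half-plane), and we may pick either one. In the Wulff--arc case the arc curves towards exactly one of the two pieces, so exactly one of $E_0$, $\Omega\setminus E_0$ is convex; we choose that one as our minimizer. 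The contact angle condition \eqref{eq:angle} at the terminal points of $\de E_0\cap\Omega$ is given directly by part \eqref{en:2} of Theorem~\ref{thm:decr} (and, alternatively, by Theorem~\ref{thm:angle} applied to $E_0$ as a minimizer of \eqref{eq:ratio}).

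For the dichotomy in the ``more precisely'' part of the statement, I would simply invoke the remark immediately preceding the theorem: if $E$ is any minimizer of \eqref{eq:ratio} with $|E|<|\Omega|/2$, then $E$ is homothetic to a Wulff sector with sides on $\de\Omega$. Together with the existence result above, this yields the claimed alternative: a minimizer of \eqref{eq:ratio} either has measure $|\Omega|/2$ (and can be taken convex), or is a Wulff sector resting on $\de\Omega$.

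The main conceptual work has already been carried out in Theorem~\ref{thm:decr} (monotonicity and lower semicontinuity of $\mu$, and the angle condition at the optimal volume), so the only genuine technical point here is the convexity of the chosen minimizer at $k=|\Omega|/2$. Proposition~\ref{properties}(\ref{item3}) is stated only for $k<|\Omega|/2$, so at the endpoint one cannot quote it directly; the little observation that the Wulff--arc (or segment) splits $\Omega$ into one convex and one possibly non-convex piece, combined with the symmetry $P_H(E_0;\Omega)=P_H(\Omega\setminus E_0;\Omega)$, is the step to be written carefully. Everything else is a direct assembly of previously established statements.
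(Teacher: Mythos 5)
Your proposal is correct and follows essentially the same route as the paper: monotonicity and lower semicontinuity of $\mu$ from Theorem \ref{thm:decr} give $C_H=\mu(|\Omega|/2)$ and hence a minimizer of measure $|\Omega|/2$, while the shape of $\de E\cap\Omega$, the dichotomy for $|E|<|\Omega|/2$, and the angle condition are quoted from Theorem \ref{th:Wulff-arc}, the remark preceding the statement, and Theorem \ref{thm:angle}, exactly as in the paper (which phrases the identification of $C_H$ through a minimizing sequence rather than through $C_H=\inf_k\mu(k)$ --- an immaterial difference). Your extra care about convexity at the endpoint $k=|\Omega|/2$ is welcome; just note that to conclude that the piece on the concave side of the Wulff arc is actually convex you also need the contact angle condition, which forces the arc to meet $\de\Omega$ transversally (since $\nu_E=\pm\nu_\Omega$ would give $0=\langle\nabla H(\nu_\Omega),\nu_\Omega\rangle=H(\nu_\Omega)$, impossible) and hence rules out a reflex angle at the terminal points, not merely the direction in which the arc curves.
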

\begin{proof}
Let $\mu$ defined as in the above theorem and, being $\mu$ decreasing in
$]0,|\Omega|/2]$, it attains its minimum at $k=|\Omega|/2$. 

Now we are able to prove that \eqref{eq:ratio} has a minimum. Let
$\tilde E$ be such that $|\tilde E|=|\Omega|/2$ and 
$\mu(|\Omega|/2)=Q(\tilde E)$. Let $E_n$, $n\in\mathbb N$ be a
minimizing sequence of problem \eqref{eq:ratio}, that is
\[
\lim_n Q(E_n)= C_H,\quad 0<|E_n|\le
|\Omega|/2.
\]
Without loss of generality, we may suppose that, for
any $n\in \mathbb N$, $Q(E_n)=\mu(|E_n|)$. Otherwise, we replace $E_n$
with the minimizer of problem \eqref{eq:pb} with volume constraint
$k=|E_n|$. Then
\[
C_H\le Q(\tilde E)=\mu(|\Omega|/2) \le \mu(|E_n|)=Q(E_n).
\] 
Passing to the limit,
\[
C_H=\mu\left(\frac{|\Omega|}{2}\right),
\]
and $\tilde E$ is a minimizer of \eqref{eq:ratio}, whose boundary in
$\Omega$ is a straight segment or a Wulff arc. From the proof of
Theorem \ref{thm:decr} it follows that if $E$ is another minimizer of
\eqref{eq:ratio} with $|E|<|\Omega|/2$, then it is a Wulff sector with
sides on $\de\Omega$. 
Recalling Theorem \ref{thm:angle}, the result is completely proved. 
\end{proof}
In the following theorem, we characterize the minimizers for
centrosymmetric sets, and find the constant $C_H$ in \eqref{eq:ratio}.

For sake of simplicity, if $T$ is a point in $\R^2$, we put
$L_H(T)=\mathcal L_H(\gamma)$, where $\mathcal L_H$ is defined in
\eqref{length}, and $\gamma$ is a curve which represent the straight
segment $OT$ joining $T$ with the origin $O$. We observe that if
$T=(x,y)$, then $L_H(T)=H(-y,x)$.

\begin{theo}\label{thm:cost}
Let $\Omega\subset\mathbb R^2$ be a convex bounded set, symmetric
about the origin $O$. Then a minimizer of \eqref{eq:ratio} is
a set $E$ whose boundary in $\Omega$ is a straight segment passing
through the origin and such that $P_H(E;\Omega)=2 r_H$, where
$r_H=r_H(\Omega)=\min_{T\in \de \Omega} L_H(T)$. Hence, 
\[
C_H = \frac{8r_H^2}{|\Omega|}.
\]
\end{theo}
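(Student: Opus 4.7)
The plan is to split the argument into an upper and a matching lower bound for $C_H$. First I would establish $C_H\le 8r_H^2/|\Omega|$ by an explicit competitor. Since $T\mapsto L_H(T)=H(-y,x)$ is continuous and $\partial\Omega$ is compact, the infimum defining $r_H$ is attained at some $T_0=(x_0,y_0)\in\partial\Omega$. By central symmetry $-T_0\in\partial\Omega$, so the segment $[-T_0,T_0]$ is a chord of the convex set $\Omega$ through the origin. Let $E_0$ be either of the two halves of $\Omega$ cut off by this chord; central symmetry gives $|E_0|=|\Omega|/2$. Applying Remark~\ref{rem:corda} to the parametrization $\gamma(t)=tT_0$, $t\in[-1,1]$, one computes $P_H(E_0;\Omega)=2H(-y_0,x_0)=2L_H(T_0)=2r_H$. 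Hence $Q(E_0)=8r_H^2/|\Omega|$, so $C_H\le 8r_H^2/|\Omega|$ and $E_0$ has the announced form.

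Next I would invoke Theorem~\ref{thm:main} to pick a minimizer $E$ of \eqref{eq:ratio}. After passing, if necessary, to the equivalent Wulff-sector-free minimizer with $|E|=|\Omega|/2$ supplied by that theorem, we may assume $|E|=|\Omega|/2$ and $\partial E\cap\Omega$ is connected and equal to either a straight segment or a Wulff arc, satisfying the contact angle condition of Theorem~\ref{thm:angle} at its terminal points $T_1,T_2\in\partial\Omega$.

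Suppose first that $\partial E\cap\Omega$ is a straight segment. Then this chord bisects the centrally symmetric convex body $\Omega$. A one-parameter monotonicity argument shows that any area-bisecting chord must pass through the origin: if we translate the chord perpendicular to itself by a signed distance $h$, the area on one side is continuous and strictly monotone in $h$; central symmetry forces the areas at $\pm h$ to sum to $|\Omega|$, so the bisecting position is exactly $h=0$. Therefore the endpoints are antipodal, $T_2=-T_1$, and by Remark~\ref{rem:corda} we get $P_H(E;\Omega)=2L_H(T_1)\ge 2r_H$, i.e. $Q(E)\ge 8r_H^2/|\Omega|$, matching the upper bound; equality forces $L_H(T_1)=r_H$, which is the described minimizer.

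The main obstacle is the Wulff-arc case, which must be ruled out (or shown to yield the same bound). Here I would exploit that $-E$ is also a minimizer, with a boundary arc of the same anisotropic curvature; then I would combine Remark~\ref{rem:corda} (the straight chord between the two terminal points of the Wulff arc has strictly smaller $H$-length than the arc) with the contact-angle rigidity of Theorem~\ref{thm:angle} and the construction techniques of Case 3 of its proof (adding and subtracting small regions to compensate for the area change) to produce a competitor $E^*$ with $|E^*|=|\Omega|/2$ and $P_H(E^*;\Omega)<P_H(E;\Omega)$, contradicting the minimality of $E$. The delicate point in this construction is the bookkeeping of the area balance while keeping the perimeter decrease uniform; this is essentially the argument already used in Theorem~\ref{thm:decr} in the centrosymmetric setting. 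Once the Wulff-arc case is excluded, the straight-segment case forces $C_H=8r_H^2/|\Omega|$, attained exactly at the chord through $O$ joining the antipodal points $\pm T_0$ where $L_H$ achieves its minimum on $\partial\Omega$.
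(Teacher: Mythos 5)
Your upper bound and your treatment of the straight-segment case are sound and essentially coincide with the paper's: the chord through $O$ joining the antipodal points where $L_H$ attains its minimum gives $Q=8r_H^2/|\Omega|$, and your observation that an area-bisecting chord of a centrally symmetric convex body must pass through $O$ (and hence has $H$-length $2L_H(T_1)\ge 2r_H$) is correct, and indeed slightly more explicit than the paper, which only remarks that off-center chords do not bisect the area.

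The gap is the Wulff-arc case, which you only sketch, and the sketch points at a method that cannot work as stated. If $\partial F\cap\Omega$ is a Wulff arc satisfying the contact angle condition at both endpoints, then $F$ is a critical configuration for the perimeter under the area constraint, so the local ``add and subtract small regions'' surgery of Case~3 of the proof of Theorem~\ref{thm:angle} (which is designed to exploit a \emph{failure} of the angle condition) produces no gain. Moreover, replacing the arc by the chord $P_1P_2$ removes the entire lens between the chord and the arc, and this lens is not small: the paper shows it must contain the center $O$, since otherwise the region $G\subset F$ cut off by the chord would contain $O$ and hence satisfy $|G|\ge|\Omega|/2$ while $G\subsetneq F$ and $|F|=|\Omega|/2$. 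So the area you would have to restore after straightening is macroscopic, and there is no a priori reason that the perimeter cost of restoring it is beaten by the gain from Remark~\ref{rem:corda}. The paper avoids any perturbation here: precisely because $O$ lies strictly between the chord $P_1P_2$ and the arc, the broken line $P_1OP_2$ lies inside the convex region bounded by the arc and the chord, whence
\[
P_H(F;\Omega) > L_H(P_1)+L_H(P_2) \ge 2r_H = P_H(E;\Omega)
\]
by monotonicity of the anisotropic length on nested convex curves and the definition of $r_H$ as a minimum over all of $\partial\Omega$. This direct global comparison is the missing idea; without it (or a substitute), the Wulff-arc case is not excluded and the lower bound $C_H\ge 8r_H^2/|\Omega|$ is not established.
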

\begin{proof}
The first step is to prove the existence of a set $E$ enjoying the
properties of the statement. Let us consider the set
\[
B(r_H)=\{(x,y)\in\R^2\colon L_H(x,y)<r_H\}.
\]
Then $\de B(r_H)$ meets $\de \Omega$ at least at two symmetric regular
points $T_1$, $T_2$. We observe that in $T_i$ the contact angle condition
is satisfied. Indeed, the anisotropic outer normal to the straight
segment $OT_i$ is $n_E(T_i) =\nabla H (-y_i,x_i)$, where $T_i=(x_i,y_i)$,
$i=1,2$. 
Denoted by $\nu_\Omega(T_i)$ the unit outer normal to $\de \Omega$ at
$T_i$, being $\nu_\Omega(T_i)=(H_y(-y_i,x_i),-H_x(-y_i,x_i))$,
we have $\langle n_E(T_i), \nu_\Omega(T_i)\rangle=0$.

We show that $T_1 T_2$ is the boundary in $\Omega$ of the
required set $E$, and $Q(E)=\frac{8r_H^2}{|\Omega|}$. 

By Theorem \ref{thm:main}, there exists a convex minimizer of
\eqref{eq:ratio} whose measure is $|\Omega|/2$, which is a straight
segment or a Wulff arc. If we show that $P_H(E;\Omega)\le
P_H(F;\Omega)$, where $F$ is a open convex subset of $\Omega$ such
that $|F|=|\Omega|/2$ and $\de F\cap \Omega$ is a straight segment or
a Wulff arc, we have done.

Clearly, any straight segment passing through the origin bounds in
$\Omega$ a set with greater perimeter than $E$ and with same area
$|\Omega|/2$. We do not consider the straight segments which not
contain the origin, because they bounds in $\Omega$ sets with measure
different from $|\Omega|/2$. Hence we can suppose that $\de F\cap
\Omega$ is a Wulff arc.

Obviously, $O\not \in \de F$, otherwise $|F|\ne |\Omega|/2$. More
precisely, denoted by $P_1$ and $P_2$ the terminal points of $\de
F\cap \Omega$, we get that $O\in F\setminus \bar G$, where
$G\subset F$ is bounded by $\de \Omega$ and $P_1P_2$, otherwise
$|\Omega|/2| \le |G|<|F|$, and this is impossible.
Hence we can consider the straight segments in $F$,
${OP_1}$ and ${OP_2}$, and it is not difficult to show that
\[
P_H(F;\Omega) > L_H (P_1)+ L_H(P_2)\ge 2r_H = P_H(E;\Omega),
\]
and this concludes the proof.
\end{proof}
\begin{rem}\label{rem:thm}
  If $\Omega=\{(x,y)\colon H(-y,x)< r\}$, i.e. $\Omega$ is obtained
  by a rotation of $\frac \pi 2$ the $r-$level set of $H$, then
  Theorem \ref{thm:cost} gives
  \[
  C_H = \frac{8r^2}{|\Omega|}=\frac{8}{\kappa_H},
  \]
  where $\kappa_H=|\{ (x,y)\colon H(x,y)<1\}|$. Observe that any
  straight segment passing through the origin and joining the boundary
  of $\Omega$ bounds a minimizer.
 
In particular, if $H(x,y)=H^o(x,y)=(x^2+y^2)^{1/2}$, we recover the
classical result $C_H=\frac{8}{\pi}$ (see for instance
\cite{maz},\cite{cia}).
\end{rem}

\section{Some examples}
Here we apply the results just obtained to some particular function
$H$.
\begin{ex}
Let $H(x,y)$ defined as
\[
H(x,y)= \left(
\frac{x^2}{a^2}+\frac{y^2}{b^2}\right)^\frac{1}{2}.
\]
An immediate calculation gives that
\[
H^o(x,y)= \left(a^2x^2+b^2y^2\right)^\frac{1}{2}.
\]
If $\Omega$ is the ellipse $\Omega\!=\!\{(x,y)\colon\!
H^o(x,y)\!<r\}$, then $\Omega\!=\!\{(x,y)\colon\!
H(-y,x)\!<\!\frac{r}{ab}\}$, and $|\Omega|=\frac{\pi r^2}{ab}$.
By Theorem \ref{thm:cost} and Remark \ref{rem:thm} we have
\begin{equation}
  \label{eq:isoel} P_H^2(E;\Omega) \ge
  \frac{8}{\pi ab} |E|, \qquad
  \forall E \subset \Omega \colon |E| \le \frac{\pi r^2}{2ab}.
\end{equation}
Moreover, the equality in \eqref{eq:isoel} holds if and only if $\de E
\cap \Omega$ is any straight segment passing through the origin (see
Figure \ref{fig:ellisse}).

We observe that if we compute $C_H$ for the ellipse
$\Omega_1=\{(x,y)\colon H(x,y)<r\}$, with for example, $a>b$, then the
smaller axis of the ellipse (in the usual sense) is the boundary of
the only minimizer of \eqref{eq:ratio} (see
Figure \ref{fig:ellisse}), and the constant $C_H$ is
\[
C_H = \frac{8}{\pi ab}\frac{b^2}{a^2}.
\]

\begin{figure}[h]
 \includegraphics[scale=.3]{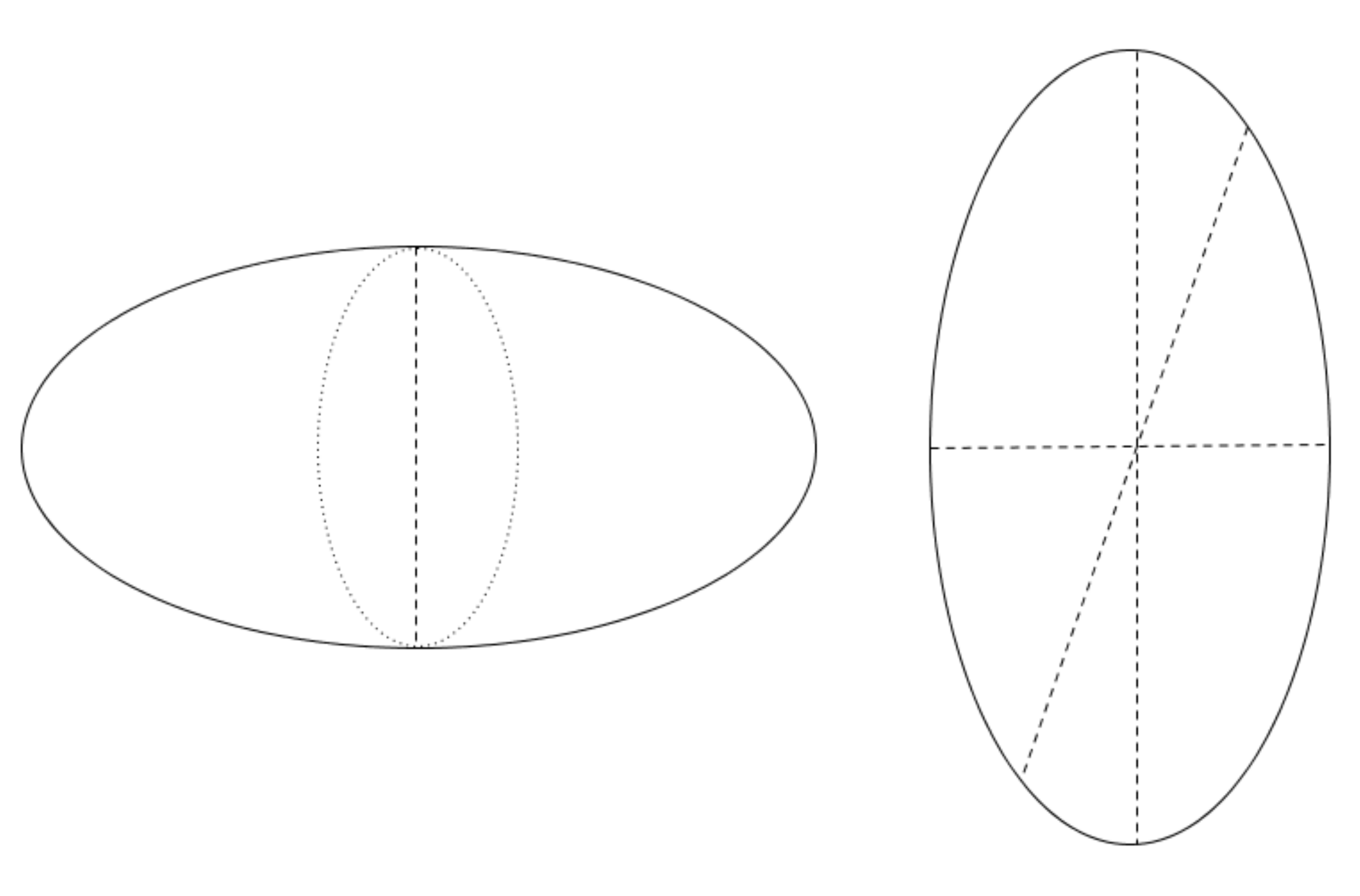}
\put(-7.8cm,1.37cm){$\Omega_1$}
\put(-.35cm,1cm){$\Omega$}
\caption{In the first figure, $\Omega_1$ is a level set of
  $H$, and the straight segment is the boundary of the only
  minimizer of~\eqref{eq:ratio}. In the second figure, $\Omega$ is a
  level set of $H^o$, and any straight segment passing through the
  origin is the boundary of a minimizer.} \label{fig:ellisse}
\end{figure}

We point out that the above result for $\Omega$ can be obtained
directly by the classical relative isoperimetric inequality for the
Euclidean perimeter. Indeed, the anisotropic relative perimeter of a
smooth set $E$, whose boundary is described by $(u(t),v(t))$, with $t \in
[\alpha,\beta]$, is
\begin{equation}
\label{eq:ell2}
P_H(E;\Omega)=\int_\alpha^\beta H(-v', u' )
\,dt=\int_\alpha^\beta\left(
\frac{(v')^2}{a^2}+\frac{(u')^2}{b^2}\right)^{\frac{1}{2}}\,dt.
\end{equation}
Defining $w=au$ and $z=bv$, the curve $(w(t),z(t))$ describe the
boundary of the unit Euclidean disk $B_r$ with radius $r$ and
centered at the origin. By changing the variables in \eqref{eq:ell2},
we get
\[
\int_\alpha^\beta
\left(\frac{(z')^2}{a^2b^2}+\frac{(w')^2}{a^2b^2}\right)^{\frac{1}{2}} dt
= \frac{1}{ab}P(\tilde E, B_1) \ge \frac{1}{ab}\sqrt{\frac{8}{\pi}}
|\tilde E|^\frac{1}{2}= \sqrt{\frac{8}{\pi ab}}|E|^{\frac 1 2},
\]
where $\tilde E$ is the set obtained by $E$ after the change of
variables. Being $|\tilde E|= ab |E|$, we get \eqref{eq:isoel}.

Finally, the characterization of the minimizers is a direct
consequence of the fact that in the classical relative isoperimetric
inequality, the minimizers are the diameters. Hence in this case we
get the relative anisotropic isoperimetric inequality by a linear
trasformation, as a consequence of the classical relative
isoperimetric inequality.
\end{ex}
\begin{ex}
Now suppose that
\[
H(x,y)=(|x|^p+|y|^p)^{\frac{1}{p}}.
\]
where $2\le p < +\infty $ and $p'=\frac{p}{p-1}$.
Hence, we have $H^o(x,y)=(|x|^{p'}+|y|^{p'})^{\frac{1}{p'}}$.

Let us consider $\Omega=\{(x,y)\colon |x|^p+|y|^p<r^p\}$. Being
$\Omega$ invariant by $\frac \pi 2-$rotations, by Theorem
\ref{thm:cost} and Remark \ref{rem:thm} we have
\[
  \label{eq:isoeli} P_H^2(E;\Omega) \ge
  {\frac{8}{\kappa_H}} |E|, \qquad \forall E\subset\Omega\colon |E|
  \le \frac{r^2 \kappa_H}{2},
\]
where $\kappa_H=|\{(x,y)\colon H(x,y)<1\}|$, and any straight segment
passing through the origin bounds a minimizer.
\end{ex}
\begin{ex}\label{ex:3}
Let $H$ be defined as follows:
\[
H(x,y)=
\begin{cases}
 (|x|^p+|y|^p)^{1/p} & \text{if } xy\ge 0, \\
(|x|^q+|y|^q)^{1/q} & \text{if } xy\le 0,
\end{cases}
\]
with $p>2$, $q>2$ and $p>q$. Let us consider $\Omega=\{(x,y)\colon
H(-y,x)<r\}$. Then
\[
C_H=C_H(\Omega)=\frac{8}{\kappa_H}.
\]
We stress that if $\Omega_1=r\{(x,y)\colon H(x,y)<r\}$, then easy
computations give that
\[
C_H=C_H(\Omega_1)= \frac{8}{\kappa_H} 4^{\frac 1 p - \frac 1 q}.
\]
Observe that $C_H(\Omega)>C_H(\Omega_1)$ (compare Figure
\ref{fig:centrosimmdoppio}).
\begin{figure}[h]
\begin{center}
  \includegraphics[scale=.2]{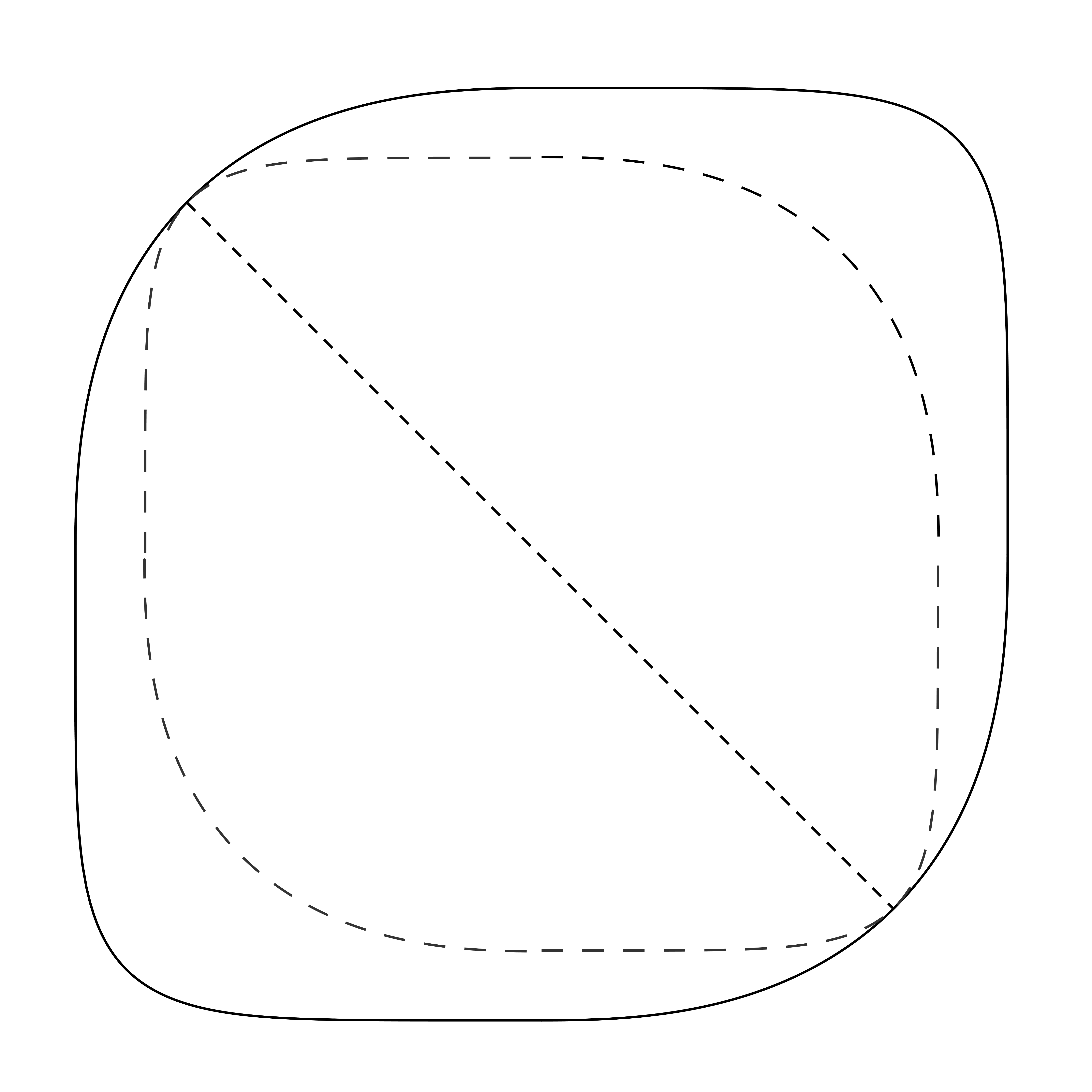}
\end{center}
\caption{Example \ref{ex:3}.
The solid line represents a level set of
  $H$, while the straight segment is the boundary of the only
  minimizer of~\eqref{eq:ratio}.} \label{fig:centrosimmdoppio}
\end{figure}
\end{ex}
\begin{ex}[A non-regular case]\label{nonreg}
Let us consider $H(x,y)=\max\{|x|,|y|\}$. The singular behavior of $H$
does not allow to apply the previous results. Then, in order to prove the
anisotropic isoperimetric inequality relative to $\Omega$ with respect
to $H$, we argue by approximation.

Let be $\Omega=\{(x,y)\colon \max\{|x|,|y|\}<r\}$, and
$H_p(x,y)=(|x|^p+|y|^p)^{1/p}$. For any set $E\subset \Omega$ such
that $|E|\le 2r^2$, we have
\begin{equation}\label{eq:inftyp}
  P^2_{H_p}(E;\Omega)\ge 2 |E|,
\end{equation}
and the best constant is reached by a rectangle whose boundary in
$\Omega$ is the straight segment joining $(-r,0)$ and $(r,0)$ (or
$(0,-r)$ and $(0,r)$). We can pass to the limit as $p\rightarrow
+\infty$ in \eqref{eq:inftyp}, obtaining
\begin{equation}\label{eq:quadrato}
P_H^2(E;\Omega)\ge 2 |E|,\qquad \forall E\subset\Omega\colon |E|\le
2r^2.
\end{equation}
\begin{figure}[h]
\begin{center}
  \includegraphics[scale=.75]
{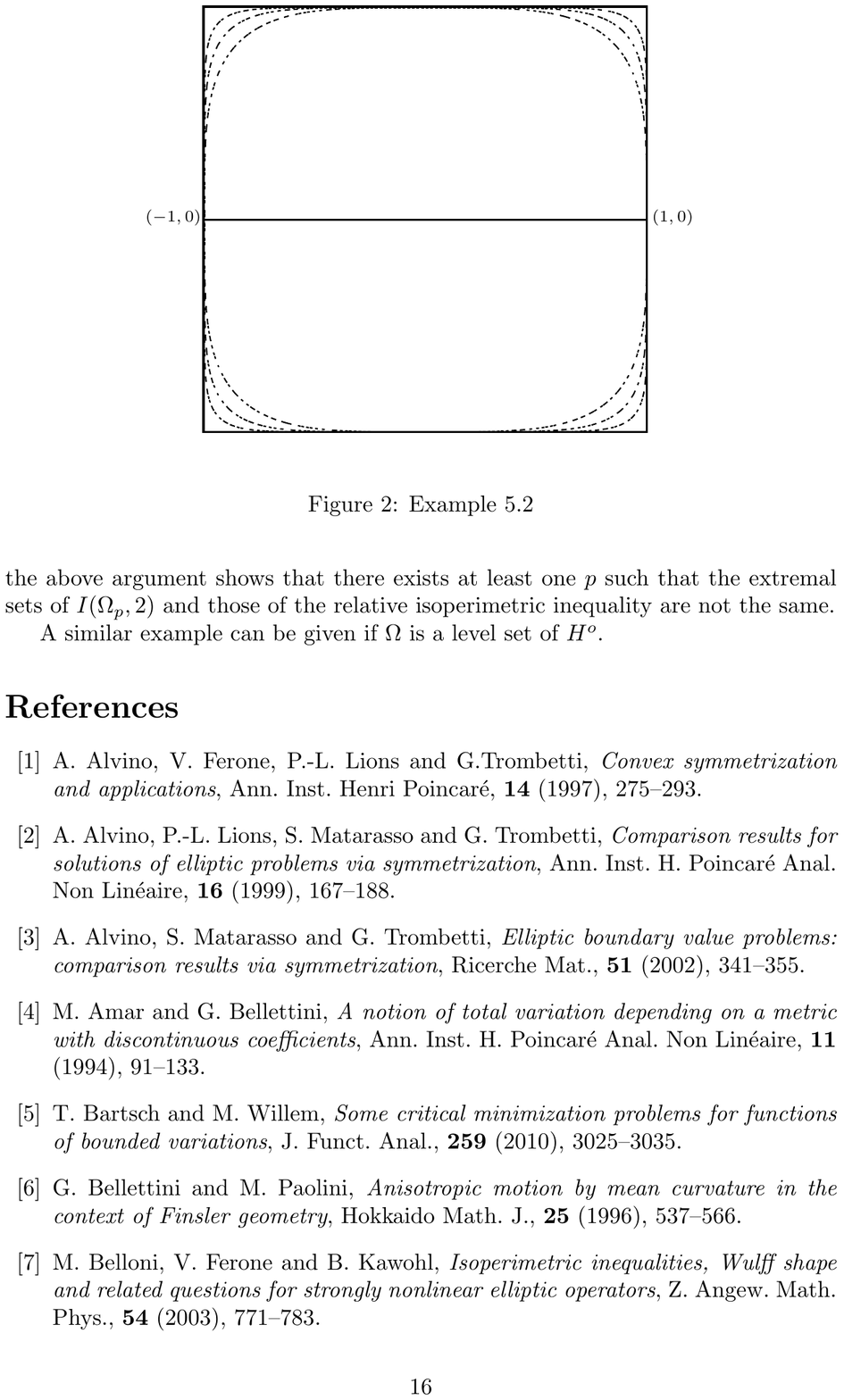}
\end{center}
\caption{Example \ref{nonreg}}
\end{figure}
Any straight segment passing through the origin and joining the
boundary of $\Omega$ bounds a minimizer. Unlike the case of $H$
smooth (compare Remark \ref{rem:thm}), such sets are not the only
minimizers. 

For example, in Figure \ref{fig:minim} some minimizer is
represented. Indeed, if $\de E$ is described by a Lipschitz function
$u(t)$, $t\in[a,b]$, the perimeter is  
\[
P_H(E;\Omega)=\int_a^b \max\{1,|-u'(t)|\}dt.
\]
Then in the picture on the left-hand side of Figure \ref{fig:minim}, the
perimeter of $E$ is $2r$ and $|E|=2r^2$. Moreover, in the other
picture any triangle $E$ such that $\de E \cap \Omega$ is a straight
segment parallel to a diagonal is a minimizer.  
\begin{figure}[h]
\begin{center}
\includegraphics{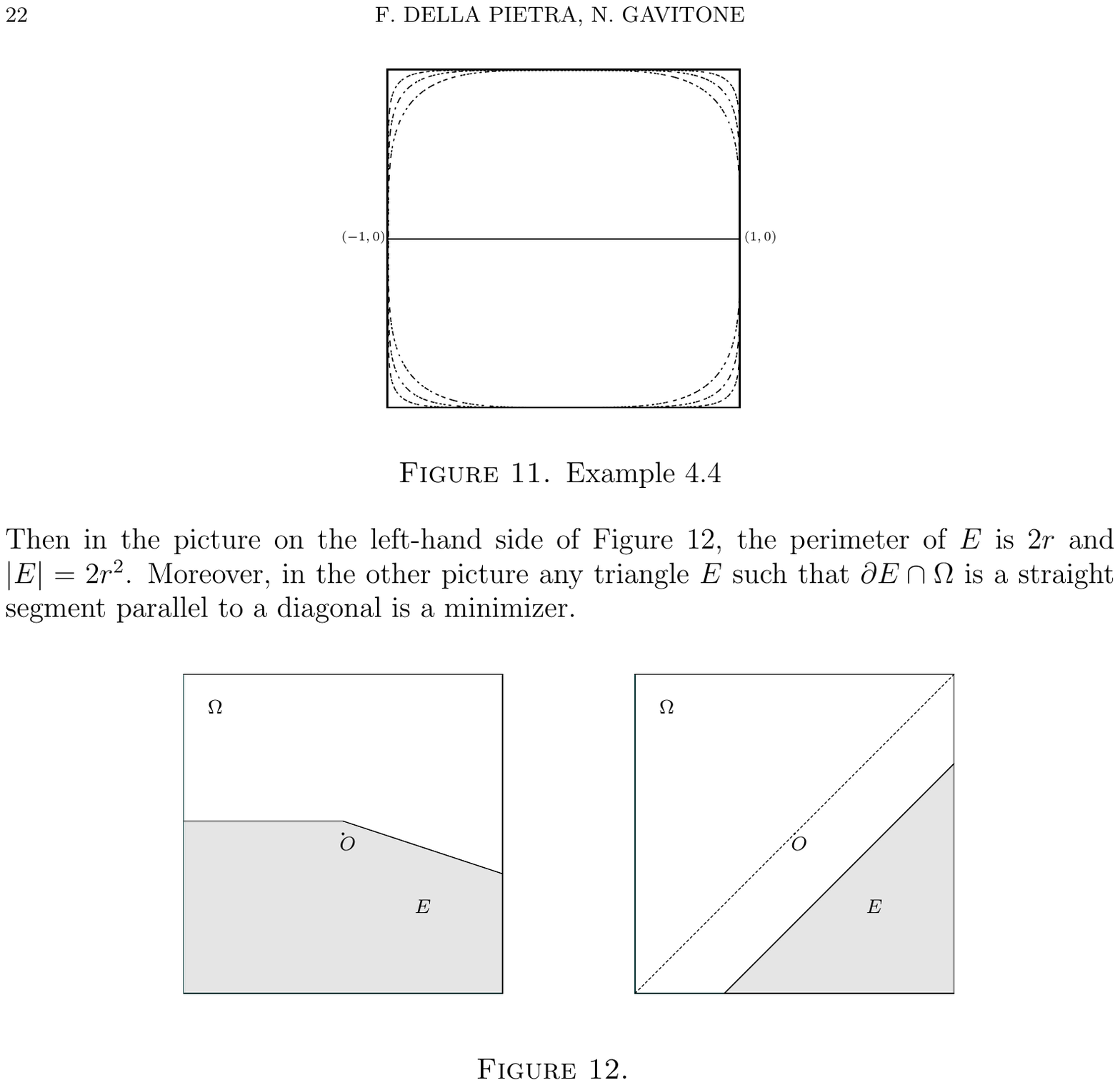}
\end{center}
\caption{}
\label{fig:minim}
\end{figure}
\end{ex}

\newpage

\end{document}